\newcommand{\mysection}[1]{\section{#1}
\setcounter{equation}{0}}
\newtheorem{theorem}{Theorem}[section]
\newtheorem{lemma}[theorem]{Lemma}
\newtheorem{proposition}[theorem]{Proposition}
\theoremstyle{definition}
\newtheorem{remark}[theorem]{Remark}
\theoremstyle{definition}
\theoremstyle{definition}
\def\dashint{\operatorname%
{\,\,\text{\bf--}\kern-.98em\DOTSI\intop\ilimits@\!\!}}
\def\l{\mathcal{L}}
\def\bR{\mathbb{R}}
\def\cD{\mathcal{D}}
\begin{document}
\title[Schauder Estimates]{The Schauder estimates for higher-order parabolic systems with time irregular coefficients}

\author[H. Dong]{Hongjie Dong}
\address[H. Dong]{Division of Applied Mathematics, Brown University,
182 George Street, Providence, RI 02912, USA}
\email{Hongjie\_Dong@brown.edu}
\thanks{H. Dong was partially supported by the NSF under agreement DMS-1056737.}

\author[H. Zhang]{Hong Zhang}
\address[H. Zhang]{Division of Applied Mathematics, Brown University,
182 George Street, Providence, RI 02912, USA}
\email{Hong\_Zhang@brown.edu}
\thanks{H. Zhang was partially supported by the NSF under agreement DMS-1056737.}


\keywords{higher-order
systems, Schauder estimates}

\begin{abstract}
We prove Schauder estimates for solutions to both divergence and non-divergence type higher-order parabolic systems in the whole space and the half space. We also provide an existence result for divergence type systems in a cylindrical domain. All coefficients are assumed to be only measurable in the time variable and H\"{o}lder continuous in the spatial variables.
\end{abstract}

\maketitle



\mysection{Introduction}
This paper is devoted to the Schauder estimates for divergence and non-divergence type higher-order parabolic systems.

It is well known that the Schauder estimates play an important role in the existence and regularity theories for elliptic and parabolic equations and systems, which have been studied by many authors.  For parabolic equations with constant coefficients, the classical approach is to study the fundamental solutions, see e.g., \cite{Kry96, FA}. Moreover,  combining the classical approach with the argument of freezing the coefficients, we can deal with general parabolic equations with smooth coefficients, for instance see \cite{Kry96, SL97, ADN64}. For systems it has become customary to use Campanato's technique, first introduced in \cite{Cam66}.  Giaquinta \cite{Gia93} provided a comprehensive explanation for the application of this technique to second-order elliptic systems.  Schlag \cite{Sh96} applied this technique to second-order parabolic systems. For higher-order systems, we also refer the reader to \cite[Chap. 3]{LA95} and the references therein.

The classical Schauder estimates were established under the assumption that coefficients are regular in both space and time. In this paper, we consider the coefficients which are regular only with respect to spatial variables. This type of coefficient has been studied by several authors mostly for second-order equations; see, for instance, \cite{BA69, BK80, L92, L00, KP10}. In \cite{L92, L96} Lieberman studied interior and boundary Schauder estimates for second-order parabolic equations with time irregular coefficients using the maximal principle and a Campanato type approach.  More recently,  Boccia \cite{Sb12} considered higher-order non-divergence type parabolic systems in the whole space.

To present our results precisely, let
\begin{equation*}
Lu=\sum_{|\alpha|\le m,\, |\beta|\le m}A^{\alpha\beta}D^\alpha D^\beta u,\quad \l u=\sum_{|\alpha|\le m,\,|\beta|\le m}D^\alpha(A^{\alpha\beta}D^\beta u),
\end{equation*}
where $m$ is a positive integer,
\begin{equation*}
D^\alpha=D_1^{\alpha_1}\cdots D_d^{\alpha_d},\quad \alpha=(\alpha_1,\cdots,\alpha_d),
\end{equation*}
and, for each $\alpha$ and $\beta$, $A^{\alpha\beta}=[A^{\alpha\beta}_{ij}]_{i,j=1}^n$ is an $n\times n$ real matrix-valued function. Moreover the leading coefficients satisfy the so-called Legendre--Hadamard ellipticity condition (see \eqref{hadamard}), which is more general than the strong ellipticity condition.  The functions used throughout this paper
\begin{equation*}
u=(u^1,\cdots,u^n)^{tr},\quad f=(f^1,\cdots,f^n)^{tr},\quad f_\alpha=(f_\alpha^1,\cdots,f_\alpha^n)^{tr}
\end{equation*}
are real vector-valued functions. The parabolic systems which we study are
\begin{equation*}
u_t+(-1)^m Lu=f,\quad u_t+(-1)^m\l u=\sum_{|\alpha|\le m}D^\alpha f_\alpha
\end{equation*}
in the whole space, or in the half space or cylindrical domains with the Dirichlet boundary condition $u=|Du|=\ldots=|D^{m-1}u|=0$,
where the first system is in the non-divergence form and the second system is in the divergence form.

{We assume that all the coefficients and data are H\"older continuous with respect to the spatial variables and merely measurable with respect to the time variable.}
For the non-divergence form systems, we prove that $D^{2m} u$ is H\"{o}lder continuous in the whole space, and in the half space all the $2m$th order derivatives of $u$ are H\"{o}lder continuous up to the boundary with the exception of  $D_d^{2m} u$, where $x_d$ is the normal direction of the boundary. For the divergence form systems, we prove that all the $m$th order derivatives are H\"{o}lder continuous up to the boundary.  We also prove an existence theorem for the divergence form systems in a cylindrical domain, provided that the boundary of the domain is sufficiently smooth. To our best knowledge, these results are new for higher-order systems and they extend the corresponding results found in Lieberman \cite{L92} for second-order scalar equations. In the special case of second-order parabolic systems, compared to \cite{Sh96} our conditions on the coefficients and data are more general. In particular, we do not require the data to be vanish on the lateral boundary of the domain, i.e., the compatibility condition in \cite{Sh96}.

For the proof, we use some results in \cite{DK10}, in which Dong and Kim proved the $L_p$ estimates for the divergence and non-divergence type higher-order parabolic systems, with the same type of coefficients considered in this paper. Let us give the outline of the proofs. In the divergence case, with the classical $L_2$ estimates, we prove
\begin{align*}
&\int_{Q_r(X_0)}|D^m u-(D^m u)_{Q_r(X_0)}|^2 dx\,dt \\
&\le C(\frac{r}{R})^{2+d+2m} \int_{Q_R(X_0)}|D^m u-(D^m u)_{Q_R(X_0)}|^2 dx\,dt,\quad\forall \, r\le R,
\end{align*}
where $u$ is a solution of
\begin{equation*}
u_t+(-1)^m \l_0 u=0\quad \text{in}\quad Q_{2R}.
\end{equation*}
Here
\begin{equation*}
\l_0=\sum_{|\alpha|=|\beta|=m}D^\alpha(A^{\alpha\beta}(t)D^\beta),
\end{equation*}
and $X_0\in Q_R$.
The coefficients $A^{\alpha\beta}$ which here depend only on $t$, are called simple coefficients. For  {a} similar inequality corresponding to the boundary estimates, we combine the $L_p$ estimates established in \cite{DK10} and the Sobolev embedding theorem to prove the H\"older continuity and obtain, for instance, the following mean oscillation type estimate {for systems} with simple coefficients,
\begin{align*}
&\int_{Q^+_r(X_0)}|D_d^mu-(D_d^mu)_{Q^+_r(X_0)}|^2\,dx\,dt\\
&\le C(\frac{r}{R})^{2\gamma+d+2m}\int_{Q^+_R(X_0)}|D_d^mu-(D_d^mu)_{Q_R^+(X_0)}|^2\,dx\,dt,\quad\forall \,{r\le R},
\end{align*}
where $X_0\in \{x_d=0\}\cap Q_{R}$ {and} $\gamma\in(0,1)$ {are arbitrary}, and $u$ satisfies
\begin{equation*}
u_t+(-1)^m \l_0 u=0\quad \text{in}\quad Q^+_{2R}
\end{equation*}
with the Dirichlet boundary condition $u=|Du|=\ldots=|D^{m-1}u|=0$ on $\{x_d=0\}$.

Similar interior and boundary estimates for the non-divergence form systems can be established in the same fashion.
In particular, we can always differentiate the system
$$
\left\{
\begin{aligned}
                \label{eq11}
&u_t+(-1)^m L_0 u=0 \quad \text{in} \quad Q_{2R}^+\\
&u=0,\,D_du=0,\cdots, D_d^{m-1}u=0\quad \text{on}\quad \{x_d=0\}\cap Q_{2R}
\end{aligned}\right.
$$
with respect to tangential direction $x^\prime$. This together with the $W_p^{1,2m}$ estimates implies that $D_{x'}D^{2m-1}u$ is in some H\"older space $C^{\frac{a}{2m},a}(Q_R^+)$ with $a$ arbitrarily close to 1 from below, which {yields} the mean oscillation type estimates.

For the coefficients which depend on both $t$ and $x$, we use the standard argument of freezing the coefficients to obtain the Campanato type estimates and then achieve the estimates of the H\"{o}lder norms.

The paper is organized as follows. In {the} next section we introduce some notation and state our main results. Section 3 is devoted to some necessary technical lemmas. In Section 4 we make necessary preparations and in Section 5 prove our main result {for divergence type systems}, Theorem \ref{existence}. Section 6 deals with non-divergence type systems. 
\mysection{main results}
We first introduce some notation used throughout the paper. A point in $\mathbb{R}^d$ is denoted by $x=(x_1,\cdots,x_d)$. We also denote $x=(x^\prime,x_d)$, where $x^\prime \in \mathbb{R}^{{d-1}}$. A point in
\begin{equation*}
\mathbb{R}^{d+1}=\mathbb{R}\times\mathbb{R}^d=\{(t,x):t\in \mathbb{R},x\in \mathbb{R}^d\}
\end{equation*}
is denoted by $X=(t,x)$.~For $T \in (-\infty,\infty]$, set
\begin{equation*}
\mathcal{O}_T=(-\infty,T)\times \mathbb{R}^d, \quad \mathcal{O}^+_T=(-\infty,T)\times\mathbb{R}^d_+,
\end{equation*}
where $\mathbb{R}^d_+=\{x=(x_1,\cdots,x_d)\in \mathbb{R}^d: x_d>0\}$. In particular, when $T=\infty$, we use $\mathcal{O}_\infty^+=\mathbb{R}\times \mathbb{R}^d_+$. {Denote}
\begin{align*}
&B_r(x_0)=\{x\in \mathbb{R}^d: |x-x_0|<r\},\quad Q_r(t_0,x_0)=(t_0-r^{2m},t_0)\times B_r(x_0),\\
&B_r^+(x_0)=B_r(x_0)\cap \bR^d_+,\qquad Q^+_r(t_0,x_0)=Q_r(t_0,x_0) \cap\mathcal{O}_\infty^+.
\end{align*}
{We use the abbreviations, for example, $B_r$ if $x_0 = 0$ and $Q_r$ if $(t_0,x_0) = (0,0)$}. The parabolic boundary of $Q_r{(t_0,x_0)}$ is defined to be
\begin{equation*}
\partial_p Q_r(t_0,x_0)=[t_0,t_0-r^{2m}) \times \partial B_r(x_0)\cup \{t=t_0-r^{2m}\} \times B_r(x_0).
\end{equation*}
The parabolic boundary of $Q^+_r(t_0,x_0)$ is
\begin{equation*}
\partial_p Q^+_r(t_0,x_0)=(\partial_p Q_r(t_0,x_0)\cap \overline{\mathcal{O}_\infty^+} )\cup(Q_r(t_0,x_0)\cap \{x_d =0\}).
\end{equation*}
We denote
\begin{equation*}
\langle f,g\rangle_\Omega=\int_\Omega f^{tr}g=\sum_{j=1}^n\int_\Omega f^jg^j.
\end{equation*}
For a function $f$ defined on $\mathcal{D} \subset \mathbb{R}^{d+1}$, we set

\begin{align*}
&[f]_{{a,b},\mathcal{D}}\equiv \sup\left\{\frac{|f(t,x)-f(s,y)|}{|t-s|^{{a}}+|x-y|^{b}} :(t,x),(s,y)\in \mathcal{D},\, (t,x)\neq (s,y) \right\},
\end{align*}
where $a,b\in (0,1].$ The H\"{o}lder semi-norm with respect to $t$ is denoted by
\begin{equation*}
\langle f\rangle_{a,\mathcal{D}}\equiv \sup \left\{\frac{|f(t,x)-f(s,x)|}{|t-s|^{a}}:(t,x),(s,x)\in \mathcal{D},\, t\neq s\right\},
\end{equation*}
where $a\in (0,1]$. We will also use the H\"{o}lder semi-norm with respect to $x$
\begin{equation*}
[f]_{a,\mathcal{D}}^\ast \equiv \sup\left\{\frac{|f(t,x)-f(t,y)|}{|x-y|^a} :(t,x),(t,y)\in \mathcal{D}, x\neq y \right\},
\end{equation*}
and denote
\begin{equation*}
\|f\|^\ast_{a,\mathcal{D}}=\|f\|_{L_\infty{(\cD)}}+[f]^\ast_{a,\mathcal{D}},
\end{equation*}
where $a\in (0,1]$.
The space corresponding to $\|\cdot\|_{a,{\cD}}^*$ is denoted by  $C^{a\ast }(\mathcal{D})$.
For $a\in (0, 1]$, set
\begin{equation*}
\|f\|_{\frac{a}{2m},a,\mathcal{D}}=\|f\|_{L_\infty(\cD)}+[f]_{\frac{a}{2m},a,\mathcal{D}}.
\end{equation*}
The space corresponding to $\|\cdot\|_{{\frac{a}{2m},} a,\mathcal{D}}$ is denoted by $C^{\frac{a}{2m},a}(\mathcal{D})$. For $a\in (1,2m)$, not an integer, we define
\begin{equation*}
\|f\|_{\frac{a}{2m},a,\mathcal{D}}=\|f\|_{L_\infty{(\cD)}}+\sum_{|\alpha|< a}[D^\alpha f]_{\frac{a-|\alpha|}{2m},\{a\},\cD},
\end{equation*}
where $\{a\}=a-[a]$.
We use the following Sobolev space
\begin{equation*}
W_p^{1,2m}((S,T)\times \Omega)=\{u: u_t, D^\alpha u \in L_p((S,T)\times \Omega),  0\le|\alpha|\le 2m\}.
\end{equation*}

We denote the average of $f$ in $\cD$ to be
$$
(f)_\mathcal{D}=\frac{1}{|\mathcal{D}|}\int_{\mathcal{D}} f(t,x)\,dx\,dt=\dashint_\mathcal{D}f(t,x)\,dx\,dt.\\
$$
Sometimes we take average only with respect to $x$. For instance,
 \begin{equation*}
  (f)_{B_R(x_0)}(t)=\dashint_{B_R(x_0)}f(t,x)\,dx.
 \end{equation*}

Throughout this paper, we assume that all the coefficients are measurable and bounded,
\begin{equation*}
|A^{\alpha\beta}|\le K.
\end{equation*}
In addition, we impose the Legendre--Hadamard ellipticity condition with constant $\lambda>0$ on the leading coefficients, i.e.,
\begin{equation}\label{hadamard}
 \sum_{|\alpha|=|\beta|=m}A^{\alpha\beta}_{ij} \xi_i\xi_j\ \eta^\alpha\eta^\beta\ge \lambda |\xi|^2|\eta|^{2m},
\end{equation}
where $\xi \in \mathbb{R}^n$, $\eta\in \mathbb{R}^d$, and $\eta^\alpha=\eta_1^{\alpha_1}\eta_2^{\alpha_2}\cdots\eta_d^{\alpha_d}$.
Here we call $A^{\alpha\beta}_{ij}$ the leading coefficients if $|\alpha|=|\beta|=m $.

Throughout this paper
\begin{equation*}
\l_0u=L_0u=\sum_{|\alpha|=|\beta|=m}D^\alpha (A^{\alpha\beta}(t)D^\beta u),
\end{equation*}
where $A^{\alpha\beta}$ are measurable in $t$ and satisfy \eqref{hadamard}.

We are ready to state the main results of the paper. The first result is about the Schauder estimate and solvability for divergence type higher-order systems in cylindrical domains.

\begin{theorem}\label{existence}
Assume $ f_\alpha \in C^{a\ast}$ for $|\alpha|=m$ and $f_\alpha \in L_\infty$ for $|\alpha|<m$, where $a \in (0,1)$. Suppose that the operator $\l $ satisfies the Legendre--Hadamard condition, i.e., \eqref{hadamard}, and $A^{\alpha \beta} \in C^{a\ast}$. Let $g$ be a smooth function in $\mathbb{R}^{d+1}$ and $\Omega \in C^{m,a}$. Then
$$
\left\{\begin{aligned}
&u_t+(-1)^m \l u =\sum_{|\alpha|\le m}D^\alpha f_\alpha \quad \text{in} \quad (0,T)\times \Omega ,\\
&u=g, \,D u=D g,\,\cdots,\,D^{m-1} u=D^{m-1} g \quad \text{on}\quad  [0,T)\times \partial\Omega,\\
&u=g\quad \text{on} \quad\{0\}\times \Omega
\end{aligned}
\right.
$$
has a unique solution $u\in C^{\frac{a+m}{2m},a+m}([0,T)\times \bar \Omega)$. Moreover there exists a constant $C=C(d,n,m,\lambda,K, \|A^{\alpha\beta}\|_a^\ast,\Omega, a)$ such that,
\begin{equation}\label{finaldiv}
\|u\|_{\frac{a+m}{2m},a+m,(0,T)\times \Omega} \le C{(\|u\|_{L_2((0,T)\times \Omega)}+F+G)},
\end{equation}
where $$F=\sum_{|\alpha|=m}[f_\alpha]_a^\ast+\sum_{|\alpha|<m}\|f_\alpha\|_{L_\infty}$$ and $$G=\sum_{|\alpha|=m}\|D^\alpha g\|_a^\ast+\sum_{|\alpha|<m}\|D^\alpha g\|_{L_\infty}+\|g_t\|_{L_\infty}.$$
\end{theorem}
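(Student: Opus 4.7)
The strategy is to reduce to homogeneous initial-boundary data, prove an a priori Schauder estimate via interior and boundary Campanato-type bounds after flattening the boundary, and then invoke the method of continuity starting from a reference operator for which existence in Sobolev spaces is already known from \cite{DK10}.

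First I would set $v=u-g$. Since $g$ is smooth and $\Omega\in C^{m,a}$, this transforms the problem into one in which $v$ and all its normal derivatives up to order $m-1$ vanish on the parabolic boundary, and the right-hand side becomes $\sum_{|\alpha|\le m}D^\alpha\tilde f_\alpha$, with the contributions from $\l g$ and $g_t$ absorbed into the data. The new $\tilde f_\alpha$ still satisfy $\tilde f_\alpha\in C^{a*}$ for $|\alpha|=m$ and $\tilde f_\alpha\in L_\infty$ for $|\alpha|<m$, with norms controlled by $F+G$.

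Next I would establish the Schauder a priori estimate. At an interior point $X_0=(t_0,x_0)$, freezing the leading coefficients at $x=x_0$ produces a simple-coefficient operator $\l_0$, and the discrepancy $(A^{\alpha\beta}(t,x)-A^{\alpha\beta}(t,x_0))D^\beta v$ is absorbed into the forcing with a $C^{a*}$-cost of order $r^a$. The interior mean-oscillation estimate quoted in the introduction, combined with a standard Campanato iteration, then yields $[D^m v]_{a/(2m),a,Q_r(X_0)}\le C(\|v\|_{L_2}+F)$. At a boundary point I would flatten $\partial\Omega$ locally via a $C^{m,a}$ diffeomorphism, under which the Legendre--Hadamard condition is preserved and the coefficients remain in $C^{a*}$, and then apply the boundary mean-oscillation estimate with $\gamma$ close to $1$ in the same way. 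A finite cover of $\overline{\Omega}$ together with a partition of unity patches these local bounds into a global $C^{a/(2m),a}$ bound on $D^m v$; lower-order derivatives, $v$ itself, and $v_t=-(-1)^m\l v+\sum D^\alpha\tilde f_\alpha$ are then controlled by interpolation and the equation, yielding \eqref{finaldiv}. For existence I would then apply the method of continuity along $\l_\tau=(1-\tau)(-\Delta)^m+\tau\l$, $\tau\in[0,1]$. The Legendre--Hadamard constant and the $C^{a*}$-norms stay uniformly bounded, so the a priori estimate applies uniformly in $\tau$. At $\tau=0$ existence of a $W_p^{1,2m}$ solution follows from the $L_p$ theory of \cite{DK10}; taking $p$ large, parabolic Sobolev embedding plus a bootstrap through the a priori estimate upgrades this to $C^{(a+m)/(2m),a+m}$. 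Uniqueness is immediate from \eqref{finaldiv} applied to the difference of two solutions.

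The main obstacle will be the boundary Campanato estimate. Unlike in the interior, one cannot freely recover all $m$-th derivatives by differentiating tangentially because of the Dirichlet conditions, so the H\"older regularity of the normal derivative $D_d^m v$ must be extracted indirectly, through the boundary $L_p$ estimate of \cite{DK10} combined with Sobolev embedding; this is also what restricts the exponent $\gamma$ in the boundary mean-oscillation inequality to being only arbitrarily close to (rather than equal to) $1$. The additional book-keeping needed to verify that flattening $\partial\Omega$ by a $C^{m,a}$ map preserves the $C^{a*}$-seminorm of the transformed leading coefficients, and that the freezing error is uniformly $O(r^a)$ in the relevant norm, is delicate but routine once the Campanato framework is set up.
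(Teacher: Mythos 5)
Your overall skeleton coincides with the paper's: reduce to $g=0$ by subtracting $g$, prove the global a priori bound \eqref{finaldiv} by combining the interior and boundary Campanato-type estimates (freezing coefficients, flattening $\partial\Omega$, partition of unity, then interpolation), and conclude by the method of continuity from $(-\Delta)^m$. However, there is a genuine gap at the continuity step. The estimate \eqref{finaldiv} is not a closed a priori estimate: its right-hand side contains $\|u\|_{L_2((0,T)\times\Omega)}$, i.e.\ the solution itself, so it cannot be ``applied uniformly in $\tau$'' to run the method of continuity. You never supply a mechanism that bounds $\|u\|_{L_2}$ by the data alone. The paper addresses exactly this point: it adds a zeroth-order term $\kappa u$ with $\kappa$ large, tests the modified system with $u$ and uses Young's inequality and the Poincar\'e/interpolation inequalities to absorb terms, obtaining $\|u\|_{L_2}\le CF$ and hence the clean estimate \eqref{finexist} whose right-hand side depends only on the data; the original problem is then reduced to the $\kappa$-problem through the substitution $v=e^{-\kappa t}u$, and the method of continuity is run between $\Delta^m+\kappa$ and $\l+\kappa$. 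Without this step (or an equivalent energy/Gronwall argument exploiting the finite time horizon and zero initial data, e.g.\ via a G\r{a}rding inequality), the closedness part of the continuity argument does not go through.

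The same omission invalidates your uniqueness claim: if $u_1,u_2$ are two solutions, \eqref{finaldiv} applied to $w=u_1-u_2$ only yields $\|w\|_{\frac{a+m}{2m},a+m}\le C\|w\|_{L_2}$, which does not force $w=0$. Uniqueness must come from an estimate with data-only right-hand side, such as \eqref{finexist} for the $\kappa$-modified system (transported back by $e^{\kappa t}$), or from a separate $L_2$ energy argument; you should state one of these explicitly. The remaining ingredients of your outline (reduction to homogeneous $g$, the boundary mean-oscillation estimate with $\gamma$ only arbitrarily close to $1$, the $C^{m,a}$ flattening preserving the $C^{a\ast}$ structure, and the identification of the base case of the continuity family) are consistent with the paper's argument.
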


The following theorem is regarding a priori interior and boundary Schauder estimates for the non-divergence type systems.
\begin{theorem}\label{nondiv theorem}
Suppose that $A^{\alpha\beta}\in C^{a\ast}$ and $f \in C^{a\ast}$, where $a\in(0,1)$. Let $u \in C^\infty_{loc}(\bR^{d+1})$ be a solution to
\begin{equation}
u_t+(-1)^m Lu=f,\label{nondivequation}
\end{equation}
where $L$ satisfies the Legendre--Hadamard condition. For any $R \le 1$,

\noindent I. Interior case: if \eqref{nondivequation} holds in $Q_{4R}$, then there exists a constant $C=C(d,n,m,\lambda,K,\|A^{\alpha\beta}\|_a^\ast,R, a)$ such that
\begin{equation*}
\|u_t\|_{a, Q_R}^*+{\|D^{2m}u\|_{\frac{a}{2m},a,Q_R}}\le C({\|f\|_{a, Q_{4R}}^{\ast}}+\|u\|_{L_2(Q_{4R})});
\end{equation*}

\noindent II. Boundary case: if \eqref{nondivequation} holds in $Q^+_{4R}$ with the Dirichlet boundary condition on $\{x_d=0\}\cap Q_{4R}$, then there exists a constant $C=C(d,n,m,\lambda,K, \|A^{\alpha\beta}\|_a^\ast, R, a)$  such that
\begin{equation*}
[D_{x^\prime}D^{2m-1}u]_{\frac{a}{2m},a,Q^+_R}
\le C[f]_{a, Q_{4R}^+}^{\ast}+C\sum_{|\gamma|\le 2m}\|D^{\gamma}u\|_{L_\infty(Q^+_{4R})}.
\end{equation*}
\end{theorem}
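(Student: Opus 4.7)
The plan is to follow the Campanato-type approach outlined in the introduction for the divergence case, but adapted to non-divergence systems: first establish mean-oscillation decay estimates for the ``simple'' operator $L_0 = L_0(t)$, then treat general $C^{a\ast}$ coefficients by freezing.

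For the interior case with simple coefficients, assume $v_t + (-1)^m L_0 v = 0$ in $Q_{2R}$. Because the coefficients depend only on $t$, every spatial derivative $D^\gamma v$ solves the same homogeneous system. Applying the interior $W_p^{1,2m}$ estimates of Dong--Kim \cite{DK10} for $p$ arbitrarily large, combined with the parabolic Sobolev embedding, yields $D^{2m}v\in C^{\frac{\gamma}{2m},\gamma}$ on interior subdomains for any $\gamma\in(0,1)$. This translates into the mean-oscillation inequality
$$
\dashint_{Q_r(X_0)} \bigl|D^{2m}v-(D^{2m}v)_{Q_r(X_0)}\bigr|^2 \le C(r/R)^{2\gamma+d+2m}\dashint_{Q_R(X_0)} \bigl|D^{2m}v-(D^{2m}v)_{Q_R(X_0)}\bigr|^2,\quad r\le R.
$$
For the boundary case, one cannot differentiate in $x_d$ (indeed $D_d^{2m}v$ need not even be H\"older up to $\{x_d=0\}$). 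The key observation, emphasized in the introduction, is that the Dirichlet condition $v=D_d v=\cdots=D_d^{m-1}v=0$ is preserved by tangential differentiation, so $D_{x'}v$ solves the same half-space problem. Applying the boundary $W_p^{1,2m}$ estimate of \cite{DK10} to $D_{x'}v$ and using Sobolev embedding then gives the analogous decay estimate for $D_{x'}D^{2m-1}v$ on $Q_R^+$.

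For general coefficients $A^{\alpha\beta}(t,x)\in C^{a\ast}$, I freeze the spatial variable at $x_0$: writing $L_0$ for the operator with coefficients $A^{\alpha\beta}(t,x_0)$,
$$
u_t+(-1)^m L_0 u=f+(-1)^{m+1}(L-L_0)u,
$$
and on any ball of radius $R$ the perturbation satisfies $|(L-L_0)u|\le CR^a\sum_{|\gamma|\le 2m}\|D^\gamma u\|_{L_\infty}$. Splitting $u=v+w$ where $v$ solves the homogeneous simple-coefficient problem with the same Cauchy/boundary data as $u$, and $w$ absorbs the right-hand side, I would apply the mean-oscillation decay to $v$ and an $L_2$ energy estimate to $w$. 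Iterating over dyadic scales and absorbing the perturbation (using that one may choose $\gamma>a$) produces a Campanato-type bound, and the Campanato characterization of $C^{\frac{a}{2m},a}$ then yields the stated H\"older estimate on $D^{2m}u$ (interior) or on $D_{x'}D^{2m-1}u$ (boundary). Finally, the bound on $\|u_t\|^\ast_{a,Q_R}$ in the interior follows from the identity $u_t=f-(-1)^m Lu$ together with $A^{\alpha\beta},\,D^{2m}u\in C^{a\ast}$.

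The main obstacle will be the boundary estimate. The absence of $C^{\frac{a}{2m},a}$ regularity for $D_d^{2m}u$ forces the whole argument to be run on the tangential piece $D_{x'}D^{2m-1}u$, while the perturbation $(L-L_0)u$ unavoidably contains \emph{every} $2m$th-order derivative, including $D_d^{2m}u$. This is why the right-hand side of the boundary estimate must carry the full sum $\sum_{|\gamma|\le 2m}\|D^\gamma u\|_{L_\infty}$ rather than just an $L_2$ norm or a H\"older seminorm. A secondary technical point is verifying that the Sobolev exponent produced by the embedding can be taken arbitrarily close to $1$ (uniformly in the scaling), so that the freezing iteration closes for any prescribed $a\in(0,1)$.
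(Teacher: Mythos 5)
Your skeleton (decay estimates for the simple operator $L_0(t)$ via the $W_p^{1,2m}$ theory of \cite{DK10} plus Sobolev embedding, tangential differentiation at the boundary, freezing of coefficients, Campanato iteration) is the same as the paper's, but two essential steps are missing, and as written the argument does not close. First, in your decomposition $u=v+w$ you let $v$ solve the homogeneous frozen-coefficient problem and let $w$ ``absorb the right-hand side'' $f+(-1)^{m+1}(L-L_0)u$. Then the best available bound is $\|D^{2m}w\|_{L_2(Q_R)}^2\lesssim \|f\|_{L_\infty}^2R^{d+2m}+R^{2a}\|D^{2m}u\|_{L_\infty}^2R^{d+2m}$: the $f$-term carries no factor $R^{2a}$, so the Campanato iteration can only produce H\"older exponent $0$, not $a$ (and indeed the statement would be false for merely bounded $f$). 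Since $f$ is only measurable in $t$, you cannot subtract a constant; you must subtract a function of $t$ alone, as the paper does: the $w$-problem is posed (in the whole or half space, with a cutoff $\zeta$) with data $\zeta\,(f-(f)_{B_{4R}}(t))+\zeta(-1)^m(L_{0y}-L)u$, and the leftover $t$-only term $(f)_{B_{4R}}(t)$ is kept in the equation for $v$; this is harmless precisely because spatial (resp.\ tangential) differentiation kills it, so $D^{2m}v$ (interior), resp.\ $D_{x'}v$ (boundary), still solves the homogeneous problem. This mechanism is the heart of the time-irregular setting and is absent from your proposal. Relatedly, for a non-divergence system with only Legendre--Hadamard, $t$-measurable coefficients, the bound on $\|D^{2m}w\|_{L_2}$ is not an ``energy estimate''; one needs the $W^{1,2m}_2$ solvability of \cite{DK10} (Theorem 2.6 there), which is why the paper solves the $w$-problem globally with a cutoff rather than on a cylinder.

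Second, at the boundary the sentence ``applying the boundary $W_p^{1,2m}$ estimate to $D_{x'}v$ and using Sobolev embedding then gives the analogous decay estimate for $D_{x'}D^{2m-1}v$'' hides the main technical difficulty. The embedding controls $[D^{2m-1}\tilde v]_{C^{\gamma}}$ on $Q^+_{R/2}$ by $\|\tilde v\|_{L_2(Q^+_R)}$ (with $\tilde v=D_{x'}v$), whereas the Campanato iteration via Lemma \ref{tech} requires the right-hand side in oscillation form, i.e.\ controlled by $\int_{Q^+_R}|D^{2m-1}\tilde v-(D^{2m-1}\tilde v)_{Q^+_R}|^2$; with only the $\|\tilde v\|_{L_2}$-based bound the inequality does not telescope over scales and no exponent $a$ can be extracted (a one-step choice of scales fails because the error term forces $R\le r$). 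In the interior this normalization is trivial --- subtract a constant, which is again a solution --- but in the half space whatever you subtract from $\tilde v$ must also satisfy the Dirichlet conditions, hence must be a polynomial of the form $x_d^mQ(x)$ with $\deg Q\le m-1$; such polynomials can only match the averages of $D^kD_d^m\tilde v$, $0\le k\le m-1$, and one then needs the parabolic Poincar\'e-type Lemma \ref{tech3} --- which uses the equation to control the time variation of slice averages $g_k(t)$ --- to bound $\|\tilde v-P\|_{L_2(Q^+_R)}$ by the mean oscillation of $D^{2m-1}\tilde v$, which is the content of Lemma \ref{nordirection}. Your proposal offers no substitute for this normalization, so the claimed boundary decay estimate, and with it the bound on $[D_{x'}D^{2m-1}u]_{\frac{a}{2m},a,Q^+_R}$, does not follow as stated. (A minor further omission: the interior statement with $\|u\|_{L_2(Q_{4R})}$ on the right also requires the standard interpolation step to remove $\sum_{|\gamma|\le 2m}\|D^\gamma u\|_{L_\infty}$.)
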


We note that the boundary estimate in Theorem \ref{nondiv theorem} is optimal even for the heat equation, in the sense that near the boundary $D_d^{2}u$ might be discontinuous with respect to $x$  if $f$ has jump discontinuities in $t$. See, for instance, \cite[\S 3]{SV88} and \cite[\S 16]{L92}. In order to estimate the H\"older semi-norm of $D_d^{2m} u$, we need to impose more regularity conditions on $A^{\alpha\beta}$ and $f$. See Remark \ref{rem6.3} below for a discussion.

\mysection{Technical preparation}
We need the following version of Campanato's theorem. The proof can be found in \cite{Gia93} and \cite{Sh96}.
\begin{lemma}\label{holder}
(i) Let $f\in L_2(Q_2)$, $a\in(0,1)$, and assume that
\begin{equation}
\dashint_{Q_r(t_0,x_0)}|f-(f)_{Q_r(t_0,x_0)}|^2\,dx\,dt\le A^2r^{2a},\quad \forall\, (t_0,x_0)\in Q_1,\label{holder1}
\end{equation}
and $0<r \le 1$. Then $f \in C^{\frac{a}{2m},a}(Q_1)$ and
\begin{equation*}
[f]_{\frac{a}{2m},a,Q_1}\le CA,
\end{equation*}
with $C=C(d)$.

ii) Let $f\in L_2(Q_2^+)$, $a\in(0,1)$, and assume that \eqref{holder1} holds for $r<{x_0}_d$. Moreover
\begin{equation*}
\dashint_{Q_r^+(t_1,x_1)}|f-(f)_{Q_r^+(t_1,x_1)}|^2\,dx\,dt\le A^2r^{2a},\quad \forall\,(t_1,x_1) \in \{x_d=0\}\cap Q_2,
\end{equation*}
 and $0<r\le 1$. Then $f \in C^{\frac{a}{2m},a}(Q_1^+)$ and
\begin{equation*}
[f]_{\frac{a}{2m},a,Q^+_1}\le CA,
\end{equation*}
with $C=C(d)$.
\end{lemma}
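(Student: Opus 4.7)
The plan is to prove this by the standard Campanato characterization, adapted to the parabolic scaling of order $2m$. For a fixed $X_0 \in Q_1$ and $0 < \rho \le 1$, I would first control the difference of averages on nested cubes via Cauchy--Schwarz: since $Q_{\rho/2}(X_0) \subset Q_\rho(X_0)$ with comparable volumes,
\begin{equation*}
\abs{(f)_{Q_{\rho/2}(X_0)} - (f)_{Q_\rho(X_0)}} \le C \Bigl( \dashint_{Q_\rho(X_0)} \abs{f - (f)_{Q_\rho(X_0)}}^2 \,dx\,dt \Bigr)^{1/2} \le CA\rho^a.
\end{equation*}
Iterating along the dyadic sequence $\rho_k = 2^{-k}\rho$ and summing the geometric series (possible because $a > 0$), the averages $(f)_{Q_{\rho_k}(X_0)}$ form a Cauchy sequence; its limit agrees with $f(X_0)$ at every Lebesgue point, and one obtains the pointwise bound $\abs{f(X_0) - (f)_{Q_\rho(X_0)}} \le CA\rho^a$. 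For two Lebesgue points $X_0, X_1 \in Q_1$, setting $\rho = |x_0 - x_1| + |t_0 - t_1|^{1/(2m)}$ and choosing a common comparison cube $Q_{C\rho}(Y)$ containing both points with volume comparable to those of $Q_\rho(X_0)$ and $Q_\rho(X_1)$, two more Cauchy--Schwarz steps yield $\abs{f(X_0) - f(X_1)} \le CA\rho^a$, which is precisely the claimed Hölder estimate for~(i).

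For part~(ii) the same scheme applies but must respect the boundary. For $X_0 \in Q_1^+$ with $d_0 = {x_0}_d$, I would iterate the interior estimate only at scales $\rho_k \le d_0$ (where the first hypothesis of (ii) is available) to obtain $\abs{f(X_0) - (f)_{Q_{d_0}(X_0)}} \le CAd_0^a$, then make a single transition to the boundary cube $Q_{d_0}^+(X_0^*)$ centered at the projection $X_0^* = (t_0, x_0', 0)$; since $Q_{d_0}(X_0) \subset Q_{Cd_0}^+(X_0^*)$ with comparable volumes, this transition costs at most $CAd_0^a$, and below scale $d_0$ the second hypothesis drives further dyadic decay on boundary cubes. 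Comparing two points $X_0, X_1 \in Q_1^+$ then proceeds by choosing a common comparison cube --- interior if both points are much farther from $\{x_d = 0\}$ than their parabolic distance, a boundary cube around a common footpoint otherwise --- and telescoping as in part~(i).

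The main obstacle is the bookkeeping in (ii): one must verify that in every configuration of $(X_0, X_1)$ relative to $\{x_d=0\}$ there exists a common comparison cube (interior or boundary) at scale comparable to their parabolic distance, whose volume is comparable to those of the cubes used along the telescope, so that each step loses only a geometric factor. Once this case analysis is organized, the remainder is identical to the classical Campanato argument.
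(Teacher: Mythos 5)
Your proposal is correct and coincides with the argument the paper relies on: the paper does not prove Lemma \ref{holder} itself but cites \cite{Gia93} and \cite{Sh96}, where precisely this Campanato-type scheme (dyadic telescoping of averages to Lebesgue points, Cauchy--Schwarz comparison of averages on nested cylinders of comparable measure, and in the half-space case the transition to boundary half-cylinders through the footpoint projection with the interior/boundary case analysis) is carried out. The remaining bookkeeping you mention, including the large-separation case handled by chaining, is routine and does not affect the validity of the approach.
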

The following technical lemma will be frequently used in this paper. The proof can be found in \cite{L92} and \cite{Sh96}.
\begin{lemma}\label{tech}
Let $\Phi$ be a nonnegative, nondecreasing function on $(0,r_0]$ such that
\begin{equation*}
\Phi(\rho)\le A(\frac{\rho}{r})^a\Phi(r)+Br^b
\end{equation*}
for any $0<\rho<r\le r_0$, where $0<b<a$ are fixed constants. Then
\begin{equation*}
\Phi(r)\le Cr^b(r_0^{-b}\Phi(r_0)+B)\quad {\forall} r \in (0,r_0)
\end{equation*}
with a constant $C=C(A,a,b).$
\end{lemma}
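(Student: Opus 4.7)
The plan is to run a standard geometric iteration with a carefully chosen ratio $\tau\in(0,1)$, play the hypothesis $b<a$ off the factor $A(\rho/r)^a$ to get a strict contraction, and then pass to an arbitrary $r\in(0,r_0)$ by the monotonicity of $\Phi$.

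First I would fix $\tau\in(0,1)$ so small that $A\tau^{a-b}\le 1/2$; this is where $b<a$ enters, since it makes $\tau^{a-b}$ arbitrarily small. With this $\tau$, the hypothesis applied at $\rho=\tau r$ gives
\[
\Phi(\tau r)\le \tfrac{1}{2}\tau^{b}\,\Phi(r)+Br^{b},\qquad 0<r\le r_0.
\]
Then I would iterate this inequality along the geometric sequence $r_k:=\tau^{k}r_0$. Setting $\Psi_k:=\Phi(r_k)/r_k^{b}$, the recursion rescales cleanly to
\[
\Psi_{k+1}\le \tfrac{1}{2}\Psi_k+B\tau^{-b},
\]
so $\Psi_k\le 2^{-k}\Psi_0+2B\tau^{-b}\le \Psi_0+2B\tau^{-b}$, i.e.
\[
\Phi(\tau^{k}r_0)\le (\tau^{k}r_0)^{b}\bigl[r_0^{-b}\Phi(r_0)+2B\tau^{-b}\bigr].
\]

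Finally, to pass from the discrete scales to an arbitrary $r\in(0,r_0)$, I would pick $k\ge0$ with $\tau^{k+1}r_0<r\le \tau^{k}r_0$ and use monotonicity $\Phi(r)\le \Phi(\tau^{k}r_0)$ together with $\tau^{k}r_0<r/\tau$. This yields
\[
\Phi(r)\le r^{b}\tau^{-b}\bigl[r_0^{-b}\Phi(r_0)+2B\tau^{-b}\bigr]=Cr^{b}\bigl[r_0^{-b}\Phi(r_0)+B\bigr],
\]
with $C$ depending only on $A,a,b$ (through $\tau$).

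There is no serious obstacle in this argument; it is a textbook iteration. The only point requiring any care is the choice of $\tau$: one must demand strict contraction $A\tau^{a}\le \tau^{b}/2$ (not merely $A\tau^{a}\le \tau^{b}$), since without the factor $1/2$ the rescaled recursion produces a term growing linearly in $k$ and the bound fails to close.
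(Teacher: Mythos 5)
Your proof is correct, and it is exactly the standard geometric-iteration argument: the paper itself does not prove Lemma \ref{tech} but refers to \cite{L92} and \cite{Sh96}, where the proof proceeds by the same choice of a ratio $\tau$ with $A\tau^{a-b}\le 1/2$, iteration along $\tau^k r_0$, and monotonicity to handle intermediate radii. No gaps; the closing remark about needing a strict contraction factor is also accurate.
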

Here is another technical lemma.
\begin{lemma}\label{tech function}
Assume that $\Omega$ is an open bounded domain in $\bR^d$. Then there exists a function $\xi\in C^{\infty}_0(\Omega)$ with unit integral such that for any $0<|\alpha|\le m$
\begin{equation*}
\int_{\Omega}\xi(y)y^\alpha dy =0.
\end{equation*}
\end{lemma}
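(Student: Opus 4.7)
The idea is to look for $\xi$ in the form $\xi(y) = P(y)\phi(y)$, where $\phi \in C_0^\infty(\Omega)$ is a fixed nonnegative bump function (say with $\phi > 0$ on a small ball $B_\rho(x_0) \subset \Omega$), and $P$ is a polynomial of degree at most $m$ whose coefficients are to be determined. Writing $P(y) = \sum_{|\gamma|\le m} c_\gamma y^\gamma$, the required conditions
\begin{equation*}
\int_\Omega P(y)\phi(y)\,dy = 1, \qquad \int_\Omega y^\alpha P(y)\phi(y)\,dy = 0 \quad (1 \le |\alpha| \le m)
\end{equation*}
become the linear system
\begin{equation*}
\sum_{|\gamma|\le m} M_{\alpha,\gamma}\, c_\gamma = \delta_{\alpha,0}, \qquad |\alpha|\le m,
\end{equation*}
where $M_{\alpha,\gamma} = \int_\Omega y^{\alpha+\gamma}\phi(y)\,dy$.

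The matrix $M = (M_{\alpha,\gamma})_{|\alpha|,|\gamma|\le m}$ is precisely the Gram matrix of the monomials $\{y^\gamma : |\gamma|\le m\}$ with respect to the inner product $\langle f,g\rangle = \int_\Omega fg\,\phi\,dy$. Since $\phi \ge 0$ and $\phi > 0$ on the open ball $B_\rho(x_0)$, no nontrivial polynomial vanishes $\phi$-almost everywhere, so the monomials of degree at most $m$ are linearly independent in $L^2(\phi\,dy)$. Consequently $M$ is positive definite, hence invertible, and the system admits a (unique) solution $(c_\gamma)_{|\gamma|\le m}$. The resulting $\xi = P\phi$ lies in $C_0^\infty(\Omega)$ and satisfies the required moment conditions.

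There is essentially no obstacle here beyond verifying the positive definiteness of the Gram matrix, which is a standard fact once one observes that a polynomial vanishing on a nonempty open set must be identically zero. If one prefers, one can simply quote the Gram determinant criterion to conclude invertibility. No regularity, dimension, or domain shape issue arises, since the entire construction is carried out inside an arbitrary small ball compactly contained in $\Omega$.
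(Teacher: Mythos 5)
Your proof is correct. It differs from the paper's in the choice of ansatz: the paper seeks $\xi=\sum_{|\beta|\le m}a_\beta\xi_\beta$ with $\xi_\beta\in C_0^\infty(\Omega)$ chosen (by density of $C_0^\infty$ in $L_2$) to approximate the monomials $y^\beta$, so that the coefficient matrix $\big\{\int_\Omega\xi_\beta\,y^\alpha\,dy\big\}$ is a small perturbation of the Gram matrix $\big\{\int_\Omega y^\beta y^\alpha\,dy\big\}$; invertibility then follows from invertibility of the latter together with continuity of the determinant. You instead take $\xi=P\phi$ with a fixed nonnegative bump $\phi$ and a polynomial $P$ of degree at most $m$, which makes the coefficient matrix \emph{exactly} the $\phi$-weighted Gram matrix of the monomials; positive definiteness is immediate since $c^{tr}Mc=\int_\Omega|P|^2\phi\,dy$ and a polynomial vanishing on the open set $\{\phi>0\}$ must vanish identically. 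Both arguments ultimately rest on the same linear-independence fact for monomials (the paper's ``easily seen'' invertibility of the unweighted Gram matrix is proved by the same observation), but your version removes the $\varepsilon$-approximation and determinant-perturbation step, giving a slightly cleaner and more self-contained argument; the paper's version has the minor advantage of not needing to note that a polynomial times a bump function is still in $C_0^\infty(\Omega)$, which is in any case trivial.
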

\begin{proof}
Let $M=\sum_{l=1}^{m}(^{l+d-1}_{\,\,d-1})$. {We look for $\xi$ in the form
\begin{equation*}
\xi(y)=\sum_{|\beta|\le {m}} a_\beta \xi_\beta(y),
\end{equation*}
where $\beta$ is an $n$-tuple multi-index and $\xi_\beta\in C_0^\infty(\Omega),|\beta|\le {m},$ are functions to be chosen.
The problem is then reduced to the following linear system of $\{a_\beta\}$:
\begin{equation*}
\sum_{|\beta|\le {m}} a_\beta \int_{\Omega}\xi_\beta(y)y^\alpha\,dy=1_{|\alpha|=0}, \quad \forall\,|\alpha|\le m.
\end{equation*}
It suffices to choose $\xi_\beta$ such that the $(M+1)\times (M+1)$ coefficient matrix $$
A:=\big\{\int_{\Omega}\xi_\beta(y) y^\alpha \,dy\big\}_{|\alpha|,|\beta|\le {m}}$$
is invertible. To this end, we use a perturbation argument. It is easily seen that the matrix
$$
\big\{\int_{\Omega}y^\beta y^\alpha \,dy\big\}_{|\alpha|,|\beta|\le {m}}
$$
is invertible. Since $C_0^\infty(\Omega)$ is dense in $L_2(\Omega)$, for any $\varepsilon>0$ and $|\beta|\le {m}$, there exists $\xi_\beta\in C_0^\infty(\Omega)$ such that $\|\xi_\beta-y^\beta\|_{L_2(\Omega)}\le \varepsilon$. By the Cauchy--Schwarz inequality,
$$
\big|\int_{\Omega}\xi_\beta(y) y^\alpha \,dy-\int_{\Omega}y^\beta(y) y^\alpha \,dy\big|\le C\varepsilon,
$$
where $C$ is a positive constant independent of $\varepsilon$. By the continuity of matrix determinant, upon taking $\varepsilon$ sufficiently small, $A$ is {still} invertible. The lemma is proved.}
\end{proof}

\mysection{Estimates for divergence type systems}
This section is devoted to the interior and boundary a priori estimates for divergence type higher-order systems.
\subsection{Interior estimates}
The main result of this subsection is the following proposition.
\begin{proposition}\label{div_complete}
Assume that $R\le 1$ and $u\in C_{loc}^\infty(\bR^{d+1})$ satisfies
\begin{equation*}
u_t+(-1)^m \l u=\sum_{|\alpha|\le m}D^\alpha f_\alpha \quad \text{in}\quad Q_{2R}.
\end{equation*}
Suppose that $A^{\alpha\beta}\in C^{a\ast}$, $f_\alpha\in C^{a\ast}$ if $|\alpha|=m$ where $a\in(0,1)$, and $f_\alpha \in L_\infty$ if $|\alpha|<m$. Then there exists a constant $C(d,n,m,\lambda,K, \|A^{\alpha\beta}\|_a^\ast, R, a)$ such that
\begin{align}
                                \label{eq4.42}
[D^m u]_{\frac{a}{2m},a,Q_R}&\le C(\sum_{|\beta|\le m}\|D^{\beta}u\|_{L_\infty(Q_{2R})}+F),\\
                                \label{eq4.43}
\langle u\rangle_{\frac{1}{2}+\frac{a}{2m}, Q_R} &\le C(\sum_{|\beta|\le m}\|D^{\beta}u\|_{L_\infty(Q_{2R})}+F),
\end{align}
where $$F=\sum_{|\alpha|=m}[f_\alpha]_{a, Q_{2R}}^\ast+\sum_{|\alpha|<m}\|f_\alpha\|_{L_\infty(Q_{2R})}.$$
\end{proposition}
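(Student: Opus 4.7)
\emph{Strategy.} The plan is a classical Campanato argument: establish a baby oscillation decay for the simple-coefficient homogeneous system, perturb via coefficient freezing, iterate with Lemma \ref{tech}, and conclude the spatial H\"older bound \eqref{eq4.42} via Lemma \ref{holder}(i). The time H\"older bound \eqref{eq4.43} is then recovered from \eqref{eq4.42} by a test-function argument based on Lemma \ref{tech function}, converting spatial regularity into temporal regularity through the equation in weak form.

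\emph{Spatial estimate.} Fix $X_0=(t_0,x_0)\in Q_R$ and $r\in(0,R/2]$, let $\bar A^{\alpha\beta}(t)=(A^{\alpha\beta})_{B_r(x_0)}(t)$, and set $\cL_0 v=\sum_{|\alpha|=|\beta|=m}D^\alpha(\bar A^{\alpha\beta}D^\beta v)$. On $Q_r(X_0)$ write $u=v+w$, where $w\in W^{1,2m}_2$ solves
\[
w_t+(-1)^m\cL_0 w=\sum_{|\alpha|\le m}D^\alpha\tilde f_\alpha\ \text{in}\ Q_r(X_0),\quad D^k w=0\ \text{on}\ \partial_p Q_r(X_0),\ 0\le k<m,
\]
with $\tilde f_\alpha$ absorbing $f_\alpha-(f_\alpha)_{Q_r(X_0)}$ (for $|\alpha|=m$), the lower-order terms of $\cL u$, and $(\bar A^{\alpha\beta}-A^{\alpha\beta})D^\beta u$. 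The homogeneous part $v=u-w$ satisfies the simple-coefficient system, and the baby Campanato estimate sketched in the introduction gives
\[
\int_{Q_\rho(X_0)}|D^m v-(D^m v)_{Q_\rho}|^2\,dX\le C(\rho/r)^{d+2m+2}\int_{Q_r(X_0)}|D^m v-(D^m v)_{Q_r}|^2\,dX
\]
for all $\rho\le r$, while the divergence-form $L_2$ estimate of \cite{DK10} applied to $w$, together with the pointwise bound $|\bar A^{\alpha\beta}-A^{\alpha\beta}|\le C\|A^{\alpha\beta}\|_a^\ast r^a$ on $B_r(x_0)$, produces $\int_{Q_r(X_0)}|D^m w|^2\,dX\le CM^2 r^{d+2m+2a}$, where $M:=F+\sum_{|\beta|\le m}\|D^\beta u\|_{L_\infty(Q_{2R})}$. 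A triangle inequality combines these into
\[
\Phi(\rho)\le C(\rho/r)^{d+2m+2}\Phi(r)+CM^2 r^{d+2m+2a},\quad \Phi(\rho):=\int_{Q_\rho(X_0)}|D^m u-(D^m u)_{Q_\rho}|^2\,dX,
\]
and Lemma \ref{tech} (with exponents $d+2m+2>d+2m+2a$) iterates this to $\Phi(\rho)\le CM^2\rho^{d+2m+2a}$ for all $\rho\le R$. Applying Lemma \ref{holder}(i) yields \eqref{eq4.42}.

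\emph{Time estimate.} Fix $x_0\in B_R$ and $t_1>t_2$ with $r:=(t_1-t_2)^{1/(2m)}\le R\le 1$ (automatic on $Q_R$), and let $\xi_r(y)=r^{-d}\xi((y-x_0)/r)$ with $\xi$ the cut-off from Lemma \ref{tech function}, so that $\int\xi_r=1$ and $\int\xi_r(y)(y-x_0)^\alpha\,dy=0$ for $1\le|\alpha|\le m$. Decompose
\[
u(t_1,x_0)-u(t_2,x_0)=\bigl[u(t_1,x_0)-\langle\xi_r,u(t_1)\rangle\bigr]+\langle\xi_r,u(t_1)-u(t_2)\rangle+\bigl[\langle\xi_r,u(t_2)\rangle-u(t_2,x_0)\bigr].
\]
The vanishing moments of $\xi_r$, a Taylor expansion of $u(\tau,\cdot)$ at $x_0$ through order $m$, and \eqref{eq4.42} bound the first and third brackets by $CMr^{m+a}$. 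For the middle bracket, use the weak form of the divergence equation:
\[
\langle\xi_r,u(t_1)-u(t_2)\rangle=\int_{t_2}^{t_1}\!\Bigl(\sum_{|\alpha|,|\beta|\le m}(\pm)\langle D^\alpha\xi_r,A^{\alpha\beta}D^\beta u\rangle+\sum_{|\alpha|\le m}(-1)^{|\alpha|}\langle D^\alpha\xi_r,f_\alpha\rangle\Bigr)d\tau.
\]
Since $\int D^\alpha\xi_r=0$ for $|\alpha|\ge 1$, one subtracts the value of the integrand at $x_0$; combined with $\|D^\alpha\xi_r\|_{L_1}\le Cr^{-|\alpha|}$, the H\"older regularity from \eqref{eq4.42} (together with its interpolants for $|\beta|<m$), the $C^{a\ast}$ assumption on $A^{\alpha\beta}$, and the $C^{a\ast}$ assumption on $f_\alpha$ when $|\alpha|=m$, each such spatial integral is bounded by $CMr^{a-|\alpha|}$; the $|\alpha|=0$ terms and the $|\alpha|<m$ terms with $f_\alpha\in L_\infty$ are handled by the trivial bound $CMr^{-|\alpha|}$. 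Multiplying by $|t_1-t_2|=r^{2m}$ and using $r\le 1$, each contribution is at most $CMr^{m+a}=CM(t_1-t_2)^{1/2+a/(2m)}$, yielding \eqref{eq4.43}.

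\emph{Main obstacle.} The technical heart of the argument is the baby Campanato oscillation bound for $\cL_0$: because the simple coefficients $\bar A^{\alpha\beta}$ are merely measurable in $t$, pointwise control of $D^{m+1}v$ is not standard. One has to differentiate the homogeneous equation repeatedly in $x$ (permitted since $\bar A^{\alpha\beta}$ is $x$-independent), invoke the rough-in-time $L_p$ theory of \cite{DK10} for the differentiated system, and close the loop through Sobolev embedding to produce the required $\rho^{d+2m+2}$ decay before the freezing/perturbation step is carried out.
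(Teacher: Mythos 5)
Your proposal is correct and follows essentially the same route as the paper: oscillation decay for the homogeneous simple-coefficient system via the interior estimates of \cite{DK10}, an energy bound for the correction $w$ with constant-subtraction in the top-order data, Campanato iteration through Lemmas \ref{tech} and \ref{holder}, and the time estimate via the vanishing-moment mollifier of Lemma \ref{tech function} combined with the equation in weak form. The only (harmless) cosmetic differences are that you freeze the coefficients at their spatial average over $B_r(x_0)$ and perform the decomposition at scale $r$ with zero Dirichlet data on $\partial_p Q_r(X_0)$, whereas the paper first treats simple coefficients on $Q_{2R}$ and then freezes at a point, bounding $\|D^m w\|_{L_2}$ by a direct testing/Poincar\'e argument rather than by citing \cite{DK10}.
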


For the proof of the proposition, we first consider homogeneous systems with simple coefficients.

\begin{lemma}\label{fund}
Assume  that u $\in C_{loc}^\infty(\mathbb{R}^{d+1})$ and satisfies
\begin{equation}
\label{hom} u_t+(-1)^m \l_0 u=0 \quad in \quad Q_{2R},
\end{equation}
where $R \in (0,\infty)$.
Then for any $ X_0 \in Q_R$ and $r<R$, There is a constant$~C=C(d,n,m,\lambda)$ such that
\begin{equation}
\int_{Q_r(X_0)}|u-(u)_{Q_r(X_0)}|^2 \,dx\,dt\le C (\frac{r}{R})^{2+2m+d} \int_{Q_R(X_0)} |u-(u)_{Q_R(X_0)}|^2\,dx\,dt.\label{eq 4.08}
\end{equation}
\end{lemma}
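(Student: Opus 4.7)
The plan is to split the analysis according to the ratio $r/R$ and exploit interior regularity of the homogeneous system to bound the oscillation of $u$ pointwise.

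When $R/2\le r<R$, the estimate is trivial. Since $(u)_{Q_r(X_0)}$ minimizes $\int_{Q_r(X_0)}|u-c|^2$ over constants $c$,
$$
\int_{Q_r(X_0)}|u-(u)_{Q_r(X_0)}|^2\,dx\,dt
\le\int_{Q_R(X_0)}|u-(u)_{Q_R(X_0)}|^2\,dx\,dt,
$$
and the missing factor $(r/R)^{2+2m+d}$ is absorbed into a constant at most $2^{2+2m+d}$.

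For $r<R/2$ I would exploit that $A^{\alpha\beta}$ depends only on $t$: \eqref{hom} is invariant under spatial translations, so every $D_x^\gamma u$ again satisfies \eqref{hom}; constants are also solutions, so $v:=u-(u)_{Q_R(X_0)}$ solves \eqref{hom} as well. Iterating the $L_2$-Caccioppoli inequality (available for $x$-independent coefficients under Legendre--Hadamard ellipticity via Plancherel in $x$, cf.\ \cite{DK10}) on $v$ and on each $D_x^\gamma v$ across a sequence of shrinking cylinders between $Q_R(X_0)$ and $Q_{R/2}(X_0)$, followed by a parabolic Sobolev embedding once enough spatial derivatives have been bounded in $L_2$, I would derive
$$
\sup_{Q_{R/2}(X_0)}|Du|\le CR^{-1-(d+2m)/2}\|v\|_{L_2(Q_R(X_0))}.
$$
The equation rewrites $u_t$ as a combination of spatial derivatives of order $2m$, so the same scheme applied to $D_x^{2m}v$ gives
$$
\sup_{Q_{R/2}(X_0)}|u_t|\le CR^{-2m-(d+2m)/2}\|v\|_{L_2(Q_R(X_0))}.
$$

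Finally, for $(t,x)\in Q_r(X_0)$ with $r\le R/2$ the mean value theorem in $x$ and in $t$ yields
$$
|u(t,x)-u(X_0)|\le r\sup_{Q_{R/2}(X_0)}|Du|+r^{2m}\sup_{Q_{R/2}(X_0)}|u_t|.
$$
Squaring, integrating over $Q_r(X_0)$ (noting $|Q_r|\le Cr^{d+2m}$), using $r\le R$ and $4m\ge 2$ to replace $(r/R)^{4m}$ by $(r/R)^2$, and invoking $\int|u-(u)_{Q_r(X_0)}|^2\le\int|u-u(X_0)|^2$, produces \eqref{eq 4.08}. The only nontrivial ingredient is the interior $L_\infty$-bound on $Du$ and $D^{2m}u$: the bootstrap is routine, but it rests on a Caccioppoli inequality that genuinely uses the Legendre--Hadamard condition, and that is supplied by the constant-in-$x$ $L_2$-theory in \cite{DK10}.
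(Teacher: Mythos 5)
Your proposal is correct and follows essentially the same route as the paper: handle comparable radii trivially, control the oscillation over $Q_r(X_0)$ by $r\sup|Du|+r^{2m}\sup|u_t|$, bound these sups by the $L_2$ norm of $u$ minus a constant (constants solve \eqref{hom}), and combine the powers of $r/R$. The only difference is that where you sketch the interior $L_\infty$ bounds on $Du$ and $u_t$ via a Caccioppoli--bootstrap--Sobolev argument, the paper simply invokes Lemma 4.3 of \cite{DK10}, which supplies exactly the estimate $\sup_{Q_{1/4}}(|Du|+|u_t|)\le C\|u\|_{L_2(Q_1)}$ after scaling.
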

\begin{proof}
By scaling and translation of the coordinates, without loss of generality, we can assume that $R=1$ and $X_0=(0,0)$. First we consider the case $r\le 1/4$. Since
\begin{equation*}
\int_{Q_r}|u-(u)_{Q_r}|^2 \,dx\,dt\le C r^{2+2m+d} \sup_{Q_r} |Du|^2+Cr^{4m+2m+d} \sup_{Q_r}|u_t|^2.
\end{equation*}
By Lemma 4.3 in \cite{DK10},
\begin{equation*}
\sup_{Q_{1/4}}(|Du|+|u_t|)\le C \|u\|_{L_2(Q_1)}.
\end{equation*}
Hence,
\begin{equation*}
\int_{Q_r}|u-(u)_{Q_r}|^2\,dx\,dt \le C r^{2+2m+{d}}  \|u\|^2_{L_2(Q_1)}.
\end{equation*}
Clearly, the inequality above holds true {in the case when $r\in [1/4,1)$ as well}. Since $u-(u)_{Q_1}$ also satisfies \eqref{hom}, substituting $u$ by $u-(u)_{Q_1}$, the lemma is proved.
\end{proof}

\begin{lemma}\label{divsim}
Assume that $r<R \le 1$ and $u\in C_{loc}^\infty(\bR^{d+1})$ satisfies
\begin{equation*}
\label{simple} u_t+(-1)^m \l_0 u=\sum_{|\alpha|\le m}D^{\alpha}f_{\alpha} \quad \text{in} \quad Q_{2R},
\end{equation*}
and  $f_\alpha \in C^{a\ast}$, if $|\alpha| =m$ where $a\in(0,1)$, and $f_\alpha \in L_\infty$ if $|\alpha|< m$.
Then for any  $X_0\in Q_R$, there exists a constant $C=C(n,m,d,\lambda)$ such that
\begin{align*}
&\int_{Q_r(X_0)}|D^mu-(D^mu)_{Q_r(X_0)}|^2\,dx\,dt \\&\le C (\frac{r}{R})^{2+2m+d} \int_{Q_R(X_0)}|D^mu-(D^mu)_{Q_R(X_0)}|^2\,dx\,dt
\\&\,\,+C\sum_{|\alpha|=m}[f_\alpha]_{a,Q_{2R}} ^{\ast2} R^{2a+2m+d} +C\sum_{|\alpha|<m} \|f_\alpha\|_{L_\infty(Q_{2R})}^2R^{2+2m+d}.
\end{align*}
\end{lemma}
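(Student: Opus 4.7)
The plan is to carry out a standard freezing-of-coefficients / Korn-Campanato decomposition on the cube $Q_R(X_0)$. Split $u = w + v$, where $w$ is the unique weak solution on $Q_R(X_0)$ of the homogeneous system
\[
w_t + (-1)^m \l_0 w = 0 \quad \text{in } Q_R(X_0), \qquad D^\gamma(w-u) = 0 \text{ on } \partial_p Q_R(X_0), \; |\gamma|\le m-1,
\]
(existence/uniqueness being standard Galerkin theory for higher-order parabolic systems with leading coefficients satisfying the Legendre--Hadamard condition, cf.\ \cite{DK10}), and $v = u - w$ solves the inhomogeneous equation on $Q_R(X_0)$ with zero Dirichlet parabolic boundary data. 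The decay for the $w$-part will come from Lemma \ref{fund}, and the $v$-part will be absorbed into the error term via an $L_2$ energy estimate.

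Before carrying out the decomposition, I would exploit the divergence structure: for $|\alpha|\ge 1$ the operator $D^\alpha$ annihilates any function depending only on $t$, so replacing $f_\alpha$ (for $|\alpha|=m$) by $\tilde f_\alpha(t,x):= f_\alpha(t,x)-(f_\alpha)_{B_R(x_0)}(t)$ leaves the equation untouched, while
\[
\|\tilde f_\alpha\|_{L_\infty(Q_R(X_0))}\le [f_\alpha]^\ast_{a,Q_{2R}}\, R^{a}.
\]
Next I would apply Lemma \ref{fund} to $D^m w$: since $A^{\alpha\beta}$ depends only on $t$, spatial differentiation commutes with $\l_0$, so $D^m w$ is also a $C^\infty$ solution of the homogeneous system on $Q_R(X_0)$. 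Lemma \ref{fund}, applied (after translation) with outer radius $R/2$ to $D^m w$, yields, for $r\le R/2$,
\[
\int_{Q_r(X_0)}\!\!|D^m w-(D^m w)_{Q_r(X_0)}|^2\,dx\,dt \le C\bigl(\tfrac{r}{R}\bigr)^{2+2m+d}\!\!\int_{Q_{R/2}(X_0)}\!\!|D^m w-(D^m w)_{Q_{R/2}(X_0)}|^2\,dx\,dt.
\]
(The case $R/2<r<R$ is trivial with a larger constant.)

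For the $v$-part I would run a standard energy estimate: test the equation for $v$ against $v$, integrate by parts in $x$ (all boundary terms vanish by the Dirichlet condition), and use coercivity of the leading form. Coercivity follows from the Legendre--Hadamard condition via the Fourier transform in $x$: since $v(t,\cdot)\in H^m_0(B_R(x_0))$ extends by zero to $\bR^d$, Parseval gives $\int A^{\alpha\beta}(t)D^\beta v\cdot D^\alpha v\, dx \ge \lambda\|D^m v(t,\cdot)\|_{L_2}^2$. Combining with Young's inequality, the Poincar\'e inequality (applied $m$ times, using $v,Dv,\ldots,D^{m-1}v$ vanishing on $\partial B_R(x_0)$) to gain powers of $R$ from lower-order data, and the zero initial trace at $t=t_0-R^{2m}$, I get
\[
\int_{Q_R(X_0)}|D^m v|^2 \le C\sum_{|\alpha|=m}\|\tilde f_\alpha\|_{L_2(Q_R)}^2 + C\sum_{|\alpha|<m} R^{2(m-|\alpha|)}\|f_\alpha\|_{L_2(Q_R)}^2,
\]
which together with the bounds on $\|\tilde f_\alpha\|_{L_\infty}$ and $R\le 1$ gives exactly the two error terms appearing in the statement.

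Finally I would combine the two ingredients using the routine triangle inequality $\int_{Q_r}|F+G-(F+G)_{Q_r}|^2\le 2\int_{Q_r}|F-(F)_{Q_r}|^2+2\int_{Q_r}|G|^2$ with $F=D^m w$, $G=D^m v$, together with
\[
\int_{Q_{R/2}}|D^m w-(D^m w)_{Q_{R/2}}|^2\le 2\int_{Q_R}|D^m u-(D^m u)_{Q_R}|^2 + 2\int_{Q_R}|D^m v|^2,
\]
to arrive at the claimed Campanato-type estimate. The main technical obstacle is the energy estimate for $v$: verifying coercivity under the Legendre--Hadamard condition (not just strong ellipticity) requires the Fourier-side argument above, which only works because the leading coefficients are $x$-independent --- this is precisely why we can afford to freeze in $x$ but not in $t$, and it is the whole reason the simple-coefficient case is the right building block for Proposition \ref{div_complete}.
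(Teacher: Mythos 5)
Your proposal is correct and follows essentially the same route as the paper: decompose $u$ into a zero-Dirichlet solution of the inhomogeneous problem, bounded by an energy estimate after replacing $f_\alpha$ ($|\alpha|=m$) by its oscillation in $x$ (the paper subtracts $f_\alpha(t,0)$, you subtract the spatial mean — equivalent), plus a homogeneous remainder to whose $m$-th derivatives Lemma \ref{fund} is applied; that the paper performs the decomposition once on $Q_{2R}$ while you do it on $Q_R(X_0)$ is immaterial. The one point you gloss over is that Lemma \ref{fund} is stated for smooth solutions, so applying it to $D^m w$ (a weak solution, not $C^\infty$ in $t$) needs either an interior regularity remark or the paper's device of temporarily assuming smooth coefficients and data and removing this by mollification at the end.
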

\begin{proof}
Let $w$ be the weak solution of the following system
$$
\left\{
\begin{aligned}
&w_t+(-1)^m \l_0 w=\sum_{|\alpha|\le m}D^{\alpha}f_{\alpha} \quad \text{in}\quad Q_{2R},\\
&w=0,\, |Dw|=0,\ldots,\,|D^{m-1}w|=0\quad \text{on} \quad \partial_pQ_{2R}.
\end{aligned}
\right.
$$
Multiply $w$ to both sides of the system and integrate over $Q_{2R}$. Due to the homogeneous Dirichlet boundary condition of $w$, we can replace $D^\alpha f_\alpha$ by $D^\alpha (f_\alpha(t,x)-f_\alpha (t,0))$ when $|\alpha|=m$. By Young's inequality, we get, for any $\varepsilon\in (0,1)$,
\begin{align}\nonumber
&\int_{Q_{2R}}|D^m w|^2\,dx\,dt\\&\le C\sum_{|\alpha|=m}\int_{Q_{2R}}(-1)^m( f_\alpha(t,x)-f_\alpha(t,0))D^\alpha w\,dx\,dt\nonumber\\
&\,+\sum_{|\alpha|<m}\int_{Q_{2R}} (-1)^{|\alpha|}f_\alpha D^\alpha w \,dx\,dt\nonumber\\
&\nonumber\le \varepsilon\|D^m w\|^2_{L_2(Q_{2R})}+  \sum_{|\alpha|=m} C(\varepsilon)\| f_\alpha(t,x)-f_\alpha(t,0)\|_{L_2(Q_{2R})}^2\\
&\label{westimate}\,+\sum_{|\alpha|<m} \|f_\alpha\|_{L_\infty(Q_{2R})} \int_{Q_{2R}}|D^\alpha w|\,dx\,dt.
\end{align}
Since$~ w=0, \,|Dw|=0,\ldots,\,|D^{m-1}w|=0 $~on~$ \partial_p Q_{2R}$,  then for $|\alpha|<m$ and $R \le 1$ by the Poincar\'e inequality
\begin{equation*}
\int_{Q_{2R}}|D^\alpha w|^2\,dx\,dt\le CR^2\int_{Q_{2R}} |D^m w|^2\,dx\,dt,
\end{equation*}
where $C$ is a universal constant.  By the Cauchy--Schwarz inequality and the inequality above, we can estimate the last term in \eqref{westimate} as follows:
\begin{align*}
&\|f_\alpha\|_{L_\infty(Q_{2R})} \int_{Q_{2R}}|D^\alpha w|\,dx\,dt\\
&\le \|f_\alpha\|_{L_\infty(Q_{2R})} |Q_{2R}|^{1/2} (\int_{Q_{2R}} |D^\alpha w|^2\,dx\,dt)^{1/2}\\
&\le C\|f_\alpha\|_{L_\infty(Q_{2R})} R|Q_{2R}|^{1/2} (\int_{Q_{2R}} |D^m w|^2\,dx\,dt)^{1/2}\\
&\le C(\varepsilon)\|f_\alpha\|_{L_\infty(Q_{2R})} ^2 R^{2+d+2m}+\varepsilon \int_{Q_{2R}} |D^m w|^2\,dx\,dt.
\end{align*}
Choosing $\varepsilon$ sufficiently small, we obtain
\begin{align}
                                \label{eq5.00}
&\int_{Q_{2R}}|D^m w|^2\,dx\,dt\nonumber\\
&\,\le C\sum_{|\alpha|=m} \| f_\alpha(t,x)-f_\alpha(t,0)\|_{L_2(Q_{2R})}^2+C\sum_{|\alpha|<m}\|f_\alpha\|_{L_\infty(Q_{2R})} ^2 R^{2+d+2m}\nonumber\\
&\,\le C\sum_{|\alpha|=m} [f_\alpha]_{a,Q_{2R}}^{*2}R^{2a+d+2m}+C\sum_{|\alpha|<m}\|f_\alpha\|_{L_\infty(Q_{2R})} ^2 R^{2+d+2m}.
\end{align}

We now temporarily assume that $A^{\alpha\beta}$ and $f_\alpha$ are smooth functions. By the  classical theory, we know that $w$ is smooth.  Consider $v=u-w$, which is a smooth function as well. Then $v$ satisfies
\begin{equation*}
v_t+(-1)^m \l_0 v=0 \quad \text{in} \quad Q_{2R}.
\end{equation*}
Since $D^m v$ satisfies the same system as $v$, applying Lemma \ref{fund} to $D^m v$, we have
\begin{align}
&\int_{Q_r(X_0)}|D^mv-(D^mv)_{Q_r(X_0)}|^2\,dx\,dt \nonumber\\
&\le C(\frac{r}{R})^{2+d+2m} \int_{Q_R(X_0)} |D^mv-(D^mv)_{Q_R(X_0)}|^2\,dx\,dt.     \label{eq8.54}
\end{align}
By the triangle inequality,
\begin{align*}
&\int_{Q_r(X_0)}|D^mu-(D^mu)_{Q_r(X_0)}|^2\,dx\,dt\\
&= \int_{Q_r(X_0)}|D^mw-(D^mw)_{Q_r(X_0)}+D^m v-(D^m v)_{Q_r(X_0)}|^2\,dx\,dt\\
&\le C\int_{Q_r(X_0)} |D^m w|^2\,dx\,dt +C\int_{Q_r(X_0)}|D^mv-(D^mv)_{Q_r(X_0)}|^2\,dx\,dt.
\end{align*}
By \eqref{eq5.00} and \eqref{eq8.54}, we get
\begin{align*}
& \int_{Q_r(X_0)}|D^mu-(D^mu)_{Q_r(X_0)}|^2\,dx\,dt\\&\le  C\int_{Q_r(X_0)} |D^m w|^2\,dx\,dt+C(\frac{r}{R})^{2+d+2m} \int_{Q_R(X_0)} |D^mv-(D^mv)_{Q_R(X_0)}|^2\,dx\,dt\\
&\le C\int_{Q_{2R}} |D^m w|^2\,dx\,dt+C(\frac{r}{R})^{2+d+2m} \int_{Q_R(X_0)} |D^mu-(D^mu)_{Q_R(X_0)}|^2\,dx\,dt\\
&\le C(\frac{r}{R})^{2+d+2m} \int_{Q_R(X_0)} |D^mu-(D^mu)_{Q_R(X_0)}|^2\,dx\,dt\\
&\,\,+ C\sum_{|\alpha|=m}[f_\alpha]_{a,Q_{2R}}^{\ast2} R^{2a+d+2m}+C\sum_{|\alpha|<m}\|f_\alpha\|^2_{L_\infty(Q_{2R})} R^{2+d+2m}.
\end{align*}
Thus the lemma is proved under the additional smoothness assumption. By a standard argument of mollification, it is easily seen that  we can remove the smoothness assumption. 
\end{proof}
Applying Lemma \ref{tech} and Lemma \ref{divsim}, we get the Campanato type estimate
\begin{align*}
&\int_{Q_r(X_0)}|D^mu-(D^mu)_{Q_r(X_0)}|^2 \,dx\,dt\\&\le C(\frac{r}{R})^{2a+d+2m} \int_{Q_R(X_0)}|D^mu-(D^mu)_{Q_R(X_0)}|^2\,dx\,dt\\
&\,\,+C(\sum_{|\alpha|=m}[f_\alpha]_{a,Q_{2R}}^{\ast2}+\sum_{|\alpha|<m}\|f_\alpha\|^2_{L_\infty(Q_{2R})})r^{2a+d+2m}
\end{align*}
for any $r<R\le 1$ and $X_0 \in Q_R$. Then,
\begin{align*}
&\frac{1}{r^{2a+d+2m}}\int_{Q_r(X_0)}|D^mu-(D^mu)_{Q_r(X_0)}|^2\,dx\,dt \\ &\le
 \frac{C}{R^{2a+d+2m}} \int_{Q_R(X_0)}|D^mu-(D^mu)_{Q_R(X_0)}|^2\,dx\,dt+CF^2\\
&\le\frac{C}{R^{2a+2m+d}} \int_{Q_{2R}}|D^m u|^2\,dx\,dt+CF^2.
\end{align*}
Thanks to Lemma \ref{holder}, we get
\begin{equation}\label{fixeq}
[D^m u]_{\frac{a}{2m},a,Q_R}\le C(R^{-(2a+d+2m)}\int_{Q_{2R}}|D^m u|^2\,dx\,dt+F^2)^{\frac{1}{2}}.
\end{equation}
As for the coefficients which also depend on $x$, the estimates follow from the standard argument of freezing the coefficients. Specifically, we first consider the operator $\l$ that only has the highest-order terms.  We fix a $y\in B_R$, and define
\begin{equation*}
\l_{0y}=\sum_{|\alpha|=|\beta|=m}D^\alpha (A^{\alpha\beta}(t,y)D^\beta).
\end{equation*}
Then
\begin{equation*}
u_t+(-1)^m \l_{0y}u=\sum_{|\alpha|\le m}D^\alpha f_\alpha-(-1)^m(\l-\l_{0y})u=\sum_{|\alpha|\le m}D^{\alpha}\tilde{f}_\alpha,
\end{equation*}
where
\begin{align*}
&\tilde{f_\alpha}=f_\alpha+(-1)^{m+1}\Big(\sum_{|\beta|=m}(A^{\alpha \beta}(t,x)-A^{\alpha\beta}(t,y))D^{\beta}u\Big)\qquad \text{ if}\quad|\alpha|=m, \\
&\tilde{f_\alpha}=f_\alpha \quad \text{if}\quad|\alpha|<m.\\
\end{align*}

Applying the method in proving Lemma \ref{divsim}, as \eqref{eq5.00} we obtain
\begin{multline*}
\|D^m w\|^2_{L_2(Q_{2R})}\le C(\sum_{|\alpha|=|\beta|=m}[A^{\alpha\beta}]_a^{\ast 2}\|D^mu\|_{L_\infty(Q_{2R})}^2 R^{2a+2m+d}
\\+\sum_{|\alpha|<m}\|f_\alpha\|_{L_\infty(Q_{2R})}^2R^{2+2m+d}
+\sum_{|\alpha|=m}[f_\alpha]_{a,Q_{2R} }^{\ast 2}R^{2a+2m+d}).
\end{multline*}
Following the proof of \eqref{fixeq}, we reach
\begin{multline}
[D^m u]_{\frac{a}{2m},a, Q_R}\le C(R^{-(2a+d+2m)}\int_{Q_{2R}}|D^m u|^2\,dx\,dt\\+\sum_{|\alpha|=|\beta|=m}[A^{\alpha\beta}]_a^{\ast 2}\|D^mu\|_{L_\infty(Q_{2R})}^2+F^2)^{\frac{1}{2}}.
    \label{highestimate}
\end{multline}

For systems with lower-order terms, we move all the lower-order terms to the right-hand side to get
\begin{align*}
u_t+(-1)^m\l_h u=&\sum_{|\alpha|=m}D^{\alpha}(f_\alpha+(-1)^{m+1}\sum_{|\beta|<m}A^{\alpha\beta}D^\beta u)\\&\,\,
+\sum_{|\alpha|<m}D^\alpha(f_\alpha+(-1)^{m+1}\sum_{|\beta|\le m}A^{\alpha\beta}D^\beta u),
\end{align*}
where $\l_h u$ denotes the sum of the highest-order terms.
With the inequality we just got, we only need to substitute $\hat{f}_\alpha=f_\alpha+(-1)^{m+1}\sum_\beta A^{\alpha\beta}D^{\beta}u$ into the estimate \eqref{highestimate} and notice that
\begin{equation*}
\sum_{|\beta|<m}[A^{\alpha\beta}D^{\beta}u]_{a,Q_{2R}}^\ast \le C\sum_{|\beta|<m}(\|A^{\alpha\beta}\|_{L_\infty}[D^\beta u]_{a,Q_{2R}}^\ast+[A^{\alpha\beta}]_a^\ast\|D^{\beta}u\|_{L_\infty(Q_{2R})}),
\end{equation*}
when $|\alpha|=m$.
An easy calculation then completes the proof of \eqref{eq4.42}.

It remains to {prove} \eqref{eq4.43}.
We estimate $|u(t,x_0)-u(s,x_0)|$, where $(t,x_0)$, $(s,x_0) \in Q_R$.
Let $\rho=|t-s|^{\frac{1}{2m}}$ and $\eta(x)=\frac{1}{\rho^d}\xi(\frac{x}{\rho})$, where $\xi(x)$ is the function in Lemma \ref{tech function}. We define
\begin{align*}
\overline{u}(t,y)= u(t,y)-T_{m,{x_0}}u(t,{y})+u(t,x_0),
\end{align*}
where $T_{m,{x_0}}u(t,{y})$ is the Taylor expansion of $u(t,{y})$ in ${y}$ at $x_0$ up to $m$th order.
 By the triangle inequality,
\begin{align}
                            \label{eq9.26}
&|u(t,x_0)-u(s,x_0)|\le|u(t,x_0)-\int_{B_\rho(x_0)}
\eta(x_0-y)\overline{u}(t,y)\,dy|\nonumber\\&\,\,+|u(s,x_0)
-\int_{B_\rho(x_0)}\eta(x_0-y)\overline{u}(s,y)\,dy|\nonumber\\
&\,\,+|\int_{B_\rho(x_0)}\eta(x_0-y)\overline{u}(s,y)\,dy-
\int_{B_\rho(x_0)}\eta(x_0-y)\overline{u}(t,y)\,dy|.
\end{align}
The first two terms on the right-hand side of \eqref{eq9.26} can be estimated in a similar fashion: noting that $\int_{B_\rho} \eta(x)\,dx=1$,
\begin{align*}
&|u(t,x_0)-\int_{B_\rho(x_0)}\eta(x_0-y)\overline{u}(t,y)\,dy|\\
&\,=|\int_{B_\rho(x_0)}\eta(x_0-y)(u(t,x_0)-\overline{u}(t,y))\,dy|.
\end{align*}
Since
\begin{align*}
|\overline{u}(t,y)-u(t,x_0)|{=|u(t,y)-T_{m,x_0}u(t,y)|}\le C [D^m u]_{\frac{a}{2m},a, Q_{2R}}|y-x_0|^{m+a},
\end{align*}
we get
\begin{align}
|u(t,x_0)-\int_{B_\rho(x_0)}\eta(x_0-y)\overline{u}(t,y)\,dy|\le C[D^m u]_{\frac{a}{2m},a, Q_{2R}} \rho^{m+a}.\label{eq4.14}
\end{align}
For the last term on the right-hand side of \eqref{eq9.26}, by the definition of $\eta$,
\begin{align*}
\int_{B_\rho(x_0)}\eta(x_0-y)\overline{u}(s,y)\,dy=\int_{B_\rho(x_0)} \eta(x_0-y)u(s,y)\,dy.
\end{align*}
Therefore,
\begin{align*}
&|\int_{B_\rho(x_0)}\eta(x_0-y)\overline{u}(t,y)\,dy-\int_{B_\rho(x_0)}\eta(x_0-y)\overline{u}(s,y)\,dy|\\&=|\int_{B_\rho(x_0)}\eta(x_0-y)u(t,y)\,dy-\int_{B_\rho(x_0)}\eta(x_0-y)u(s,y)\,dy|\\
&=|\int_s^t \int_{B_\rho(x_0)}\eta(x_0-y)u_t(\tau,y)\,dy\,d\tau|.
\end{align*}
Because $u$ satisfies the equation,
\begin{align}\nonumber
&|\int_s^t \int_{B_\rho(x_0)}\eta(x_0-y)u_t(\tau,y)\,dy\,d\tau|\\&=|\int_s^t \int_{B_\rho(x_0)}\eta(x_0-y)((-1)^{m+1}\sum_{|\alpha|\le m, |\beta|\le m}D^\alpha (A^{\alpha\beta}D^\beta u(\tau,y))\nonumber\\
&\,\,+\sum _{|\alpha|\le m} D^\alpha f_\alpha)\,dy\,d\tau|\label{ut estimate}.
\end{align}
Since $\eta(x_0-y)$ has compact support in $B_\rho(x_0)$, we use $f_\alpha -(f_\alpha)_{B_\rho(x_0)}$ to substitute $f_\alpha$ when $|\alpha|=m$.
Moreover for $|\alpha|=m$,
\begin{align*}
&D^\alpha (A^{\alpha\beta}D^\beta u)\\
&=D^\alpha \big((A^{\alpha\beta}-(A^{\alpha\beta})_{B_\rho(x_0)})D^\beta u\big)+D^\alpha( (A^{\alpha\beta})_{B_\rho(x_0)}(D^\beta u-(D^\beta u)_{B_\rho(x_0)})).
\end{align*}
We plug all these into \eqref{ut estimate} and integrate by parts. It follows easily that
\begin{align*}
&|\int_s^t \int_{B_\rho(x_0)}\eta(x_0-y)u_t(\tau,y)\,dy\,d\tau|\\
&\le \sum_{|\alpha|=m, |\beta|\le m}([A^{\alpha\beta}]_a^\ast\|D^\beta u\|_{L_\infty(Q_{2R})}+{[D^\beta u]_{a,Q_{2R}}^*}\|A^{\alpha\beta}\|_{L_\infty})\rho^a\\
&\,\,\cdot\int_{Q_\rho(X_0)}|D^m\eta(x_0-y)|\,dy\,dt+\sum_{|\alpha|=m}\|D^m \eta\|_{L_\infty}[f_\alpha]_{a,Q_{2R}}^\ast|Q_\rho|\rho^a \\
&\,\,+\sum_{|\alpha|<m, |\beta|\le m}\|A^{\alpha\beta}\|_{L_\infty}\|D^\beta u\|_{L_\infty(Q_{2R})}\int_{Q_{\rho}(X_0)}|D^\alpha \eta(x_0-y)|\,dy\,dt\\
&\,\,+\sum_{|\alpha|<m}\|f_\alpha\|_{L_\infty(Q_{2R})}\int_{Q_\rho(X_0)}|D^\alpha \eta(x_0-y)|\,dy\,dt\\
&\le C(\|D^m u\|_{\frac{a}{2m},a,Q_{2R}}+{\sum_{|\beta|<m}{\|D^\beta u\|_{a, Q_{2R}}^*}})\rho^{m+a}+C\sum_{|\alpha|=m}[f_\alpha]_{a,Q_{2R}}^\ast\rho^{m+a}\\&
\,\,+C\sum_{|\alpha|<m}(\|f_\alpha\|_{L_\infty(Q_{2R})}+\sum_{|\beta|\le m}\|A^{\alpha\beta}\|_{L_\infty}\|D^\beta u\|_{L_\infty(Q_{2R})})\rho^{m+1}.
\end{align*}
Here we used $\|D^\alpha \eta\|_{L_\infty}\le C \rho^{-d-|\alpha|}$ for any $|\alpha|\le m$.
Hence, {combining with} \eqref{eq4.14},
\begin{align*}
|u(t,x_0)-u(s,x_0)|\le C\rho^{m+a}(\|D^m u\|_{\frac{a}{2m},a,Q_{2R}}+\sum_{|\beta|\le m}\|D^\beta u\|_{L_\infty(Q_{2R})}+F),
\end{align*}
which together with \eqref{eq4.42} implies \eqref{eq4.43}. This completes the proof of the proposition.

\subsection{Boundary estimates}
As in the previous subsection, we first consider systems with simple coefficients without lower-order terms:
\begin{align}
&u_t+(-1)^m\l_0u=0 \quad \text{in}\quad Q^+_{2R},\label{beq}\\
&u=0,~D_du=0,~\cdots,~ D_d^{m-1}u=0 \quad \text{on} \quad \{x_d=0\}\cap Q_{2R}.\label{bcd}
\end{align}

Thanks to the $L_p$ estimates in the half space obtained in \cite{DK10}, we are able to derive a local $W_p^{1,2m}$ estimate for solution of \eqref{beq} and \eqref{bcd} by the method in Lemma 4.1 in \cite{DK10}. Namely,
\begin{equation}
\label{lp}\|u\|_{W_p^{1,2m}(Q_R^+)}\le C(R,p)\|u\|_{L_p(Q^+_{2R})},
\end{equation}
where $C(R,p)$ is a constant.
Now we are ready to prove the following lemma.

\begin{lemma}\label{keybd}
Assume that $0<r\le R<\infty$, $\gamma \in (0,1)$, and $u\in C_{loc}^\infty({\overline{{\mathbb{R}_+^{d+1}}}})$ satisfies \eqref{beq} and \eqref{bcd}. Then there exists a constant $C=C(d,n,m,\lambda,K, \gamma)$, such that
\begin{align}
&\int_{Q^+_r(X_0)}|u|^2\,dx\,dt\le C(\frac{r}{R})^{d+4m}\int_{Q^+_R(X_0)}|u|^2\,dx\,dt \label{morrey},\\
&\int_{Q^+_r(X_0)}|D^l_d u|^2 \,dx\,dt \le C (\frac{r}{R})^{2(m-l)+d+2m}\int_{Q^+_R(X_0)}|D^l_d u|^2 \,dx\,dt,\label{lownor}\\
&\nonumber\int_{Q^+_r(X_0)}|D_d^mu-(D_d^mu)_{Q^+_r(X_0)}|^2\,dx\,dt \\
&\le C(\frac{r}{R})^{2\gamma+d+2m}\int_{Q^+_R(X_0)}|D_d^mu-(D_d^mu)_{Q_R^+(X_0)}|^2\,dx\,dt,\label{com}
\end{align}
where $X_0 \in \{x_d=0\}\cap Q_R$  and  ${0}<l<m$.
\end{lemma}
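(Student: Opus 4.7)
The rescaling $(t,x)\mapsto(R^{2m}t,Rx)$ preserves \eqref{beq}--\eqref{bcd} with unchanged $\lambda$ and $K$, so it suffices to prove the three inequalities at the scale $R=1$ and then unscale. The driving observation is that the Dirichlet conditions \eqref{bcd} make $u,D_du,\ldots,D_d^{m-1}u$ vanish on $\{x_d=0\}$. Taylor's theorem in the $x_d$--variable thus gives, for $0\le l\le m-1$,
\[
D_d^l u(t,x',x_d)=\int_0^{x_d}\frac{(x_d-s)^{m-l-1}}{(m-l-1)!}\,D_d^m u(t,x',s)\,ds,
\]
so on $Q_r^+(X_0)$ with $X_0\in\{x_d=0\}\cap Q_1$ and $r\le 1/2$ one has $|D_d^l u|\le C x_d^{m-l}\|D_d^m u\|_{L_\infty(Q_{1/2}^+(X_0))}$ and therefore
\[
\int_{Q_r^+(X_0)}|D_d^l u|^2\,dx\,dt\le C r^{2(m-l)+d+2m}\|D_d^m u\|_{L_\infty(Q_{1/2}^+(X_0))}^2.
\]

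The next step is to replace the $L_\infty$ bound on $D_d^m u$ by $L_2$ norms. Using \eqref{lp} with a large $p$, applied iteratively to the tangential derivatives $D_{x'}^{\alpha'}u$ (each of which still solves \eqref{beq}--\eqref{bcd} since the coefficients depend only on $t$), and exploiting the equation itself to trade any $D_d^{2m}$ for other derivatives, one bootstraps local regularity to obtain, for the prescribed $\gamma\in(0,1)$,
\[
\|D_d^m u\|_{L_\infty(Q_{1/2}^+(X_0))}+[D_d^m u]_{\frac{\gamma}{2m},\gamma,Q_{1/2}^+(X_0)}\le C\|u\|_{L_2(Q_1^+(X_0))},
\]
the H\"older half coming from the parabolic Sobolev embedding $W_p^{1,2m}\hookrightarrow C^{\gamma/2m,\gamma}$ with $p$ so large that $\gamma<m-(d+2m)/p$. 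Iterated Poincar\'e in the $x_d$-direction, valid because $u,D_du,\ldots,D_d^{l-1}u$ all vanish on $\{x_d=0\}$, gives for $0\le l\le m-1$ the chain $\|u\|_{L_2(Q_1^+(X_0))}\le C\|D_d^l u\|_{L_2(Q_1^+(X_0))}$. Plugging these into the previous display yields \eqref{morrey} (with $l=0$) and \eqref{lownor} at $R=1$ for $r\le 1/2$, and the remaining range $1/2<r\le 1$ is trivial.

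For \eqref{com}, the key idea is to subtract off a vertical polynomial that carries the mean of $D_d^m u$: set $c=(D_d^m u)_{Q_1^+(X_0)}$ and $v(x)=c\,x_d^m/m!$. Since $v$ depends only on $x_d$, one has $v_t=0$ and $\l_0 v=0$ (the only $D^\beta v$ with $|\beta|=m$ is the constant $D_d^m v=c$, which is killed by the outer $D^\alpha$), and moreover $v=D_dv=\cdots=D_d^{m-1}v=0$ at $x_d=0$. Hence $\tilde u=u-v$ again solves \eqref{beq}--\eqref{bcd}, with $D_d^m\tilde u=D_d^m u-c$. Applying the H\"older bound of the previous paragraph to $\tilde u$, together with iterated Poincar\'e, yields
\[
[D_d^m\tilde u]_{\frac{\gamma}{2m},\gamma,Q_{1/2}^+(X_0)}\le C\|\tilde u\|_{L_2(Q_1^+(X_0))}\le C\|D_d^m u-(D_d^m u)_{Q_1^+(X_0)}\|_{L_2(Q_1^+(X_0))}.
\]
Bounding the LHS of \eqref{com} by the oscillation of $D_d^m\tilde u$ on $Q_r^+(X_0)$, which is of size $\lesssim r^\gamma[D_d^m\tilde u]_{\frac{\gamma}{2m},\gamma,Q_{1/2}^+(X_0)}$, and multiplying by $|Q_r^+|\sim r^{d+2m}$, then unscaling, finishes the proof. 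The main obstacle is the bootstrapped H\"older bound on $D_d^m u$ in the second paragraph; once that quantitative control is in place, the Taylor--plus--Poincar\'e machinery is purely algebraic.
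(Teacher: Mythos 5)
Your proposal is correct and follows essentially the same route as the paper: rescale to $R=1$, use the local estimate \eqref{lp} bootstrapped to large $p$ together with the parabolic Sobolev embedding to bound $\|D_d^m u\|_{L_\infty(Q^+_{1/2})}$ and $[D_d^m u]_{\frac{\gamma}{2m},\gamma,Q^+_{1/2}}$ by $\|u\|_{L_2(Q^+_1)}$, then Taylor/Poincar\'e in the $x_d$-direction for \eqref{morrey} and \eqref{lownor}, and subtract the polynomial $\frac{x_d^m}{m!}(D_d^m u)_{Q^+_1}$ (exactly the paper's choice) for \eqref{com}. The only immaterial difference is in how the intermediate bound is produced: the paper bootstraps integrability on shrinking cylinders via $W_q^{1,2m}\hookrightarrow L_{q'}$ and then embeds $W_p^{1,2m}\hookrightarrow C^{\frac{a}{2m},a}$ with $a$ close to $2m$, rather than differentiating the system tangentially and invoking the equation.
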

\begin{proof}
By scaling and translation of the coordinates, without loss of generality, we can assume $R=1$ and $X_0=(0,0)$. From \eqref{lp},  for any $r_1<2$, there exists a constant $C=C(r_1)$ so that
\begin{equation*}
\label{boot2}\|u\|_{W_2^{1,2m}(Q^+_{r_1})}\le C\|u\|_{L_2(Q^+_{2})}.
\end{equation*}
By the Sobolev embedding theorem,
\begin{equation*}
\|u\|_{L_p(Q^+_{r_1})}\le C \|u\|_{W_2^{1,2m}(Q^+_{r_1})},
\end{equation*}
where $1/p>1/2-1/(d+1)$. Using \eqref{lp} again, we get
\begin{equation*}
\|u\|_{W_p^{1,2m}(Q^+_{r_2})}\le C\|u\|_{L_p(Q^+_{r_1})} \le C \|u\|_{L_2(Q^+_{2})},
\end{equation*}
where $r_2<r_1$ and $C=C(r_1,r_2)$.
Due to a standard argument of bootstrap with a sequence of shrinking cylinders$ ~Q_{r_l}$, for any $p>2$ there is a $C$ depends on $p$ such that
\begin{equation}\label{boot}
\|u\|_{W_p^{1,2m}(Q^+_1)}\le C\|u\|_{L_2(Q^+_{2})}.
\end{equation}
By the Sobolev embedding theorem, we have
\begin{align}
&W_p^{1,2m}(Q^+_1)\hookrightarrow C^{\frac{a}{2m},a}(Q^+_1),\nonumber\\
&\|u\|_{\frac{a}{2m},a,Q_1^+}\le C\|u\|_{W^{1,2m}_p(Q^+_1)},\label{embed ineq}
\end{align}
where $a=2m-(d+2m)/p$.
Since $p<\infty$ can be arbitrarily large, $a $ can be arbitrarily close to $2m$ from below.

Let us first prove (\ref{morrey}). We only need to consider $r< 1/2$, because otherwise it is obvious. By the Poincar\'e inequality and \eqref{embed ineq} with some $a>m$,
\begin{align*}
\int_{Q^+_r}|u|^2\,dx\,dt&\le C r^{2m}\|D_d^mu\|_{L_\infty(Q^+_r)}^2|Q^+_r|\le C r^{d+4m}\|D_d^mu\|_{L_\infty(Q^+_{1/2})}^2\\
&\le Cr^{d+4m}\|u\|^2_{W_p^{1,2m}(Q^+_{1/2})} \le C r^{d+4m}\int_{Q^+_1}|u|^2\,dx\,dt.
\end{align*}
The last inequality is due to the $L_p$ estimate in \eqref{boot} with $1/2$ and $1$ in place of $1$ and $2$ respectively.
Next, we prove \eqref{lownor} and only consider the case when $r<1/2$ as in proving \eqref{morrey}. By the Poincar\'e inequality and \eqref{embed ineq} with some $a>m$, we have
\begin{align*}
&\int_{Q^+_r} |D^l_d u|^2 \,dx\,dt \le r^{2(m-l)} \int_{Q^+_r}|D^m_d u|^2 \,dx\,dt\\
&\le Cr^{2(m-l)+d+2m} \sup_{Q^+_{1/2}} |D^m_d u|^2 \le Cr^{2(m-l)+d+2m}\|u\|^2_{W_p^{1,2m}(Q^+_{1/2})}\\
&\le C r^{2(m-l)+d+2m}\int_{Q^+_1}|u|^2\,dx\,dt
\le C r^{2(m-l)+d+2m}\int_{Q^+_1}|D^l_du|^2\,dx\,dt.
\end{align*}
As for (\ref{com}), let $v=u-\frac{x_d^m}{m!}(D^m_du)_{Q^+_1}$ so that $v$ also satisfies the system as $u$. Moreover,
$D_d^mv=D_d^mu-(D_d^mu)_{Q^+_1}$. Then, by  \eqref{embed ineq}, \eqref{boot}, and the Poincar\'e inequality, for $a={m+\gamma}\in(m,m+1)$ and $r<1/2$,
\begin{multline*}
\int_{Q^+_r}|D_d^mv-(D_d^mv)_{Q^+_r}|^2\,dx\,dt\le C r^{2(a-m)+d+2m}\|v\|^2_{\frac{a}{2m},a,Q_r^+}\\
 \le C r^{2{\gamma}+d+2m} \int_{Q^+_1}|v|^2\,dx\,dt\le  Cr^{2{\gamma}+d+2m}\int_{Q^+_1}|D_d^mv|^2\,dx\,dt.
\end{multline*}
The lemma is proved.
\end{proof}
Similar to Lemma \ref{divsim}, we consider
\begin{equation}
\label{bddiv}u_t+(-1)^m\l_0u=\sum_{|\alpha|\le m}D^\alpha f_{\alpha} \quad \text{in} \quad Q^+_{2R},
\end{equation}
and $u$ satisfies (\ref{bcd}).

\begin{lemma} \label{bdholder}
Assume that $u\in C_{loc}^\infty({\overline{\bR^{d+1}_+}})$ satisfies \eqref{bddiv} and \eqref{bcd}, and  $f_\alpha \in C^{a \ast}$ for $|\alpha| =m$ where $a\in (0,1)$, $f_\alpha \in L_\infty$ for $|\alpha|< m$.
Then for any $0<r<R\le1, \gamma\in (0,1)$ and $X_0\in \{x_d=0\}\cap Q_R$, there exists a constant $C=C(d,m,n,\lambda, K, \gamma)$ such that
\begin{align}
&\nonumber\int_{Q^+_r(X_0)}|D^m_{x^\prime}u|^2 \,dx\,dt\\&\le C (\frac{r}{R})^{4m+d} \int_{Q^+_R(X_0)}|D^m_{x^\prime}u|^2\,dx\,dt+C\sum_{|\alpha|=m}[f_\alpha]_{a,Q_{2R}^+} ^{\ast 2}R^{2a+2m+d} \nonumber
\\&\,\,+C\sum_{|\alpha|<m}\|f_\alpha\|^2_{L_\infty(Q_{2R}^+)}R^{2+2m+d}\label{bdmor},
\end{align}
\begin{align}
&\nonumber\int_{Q^+_r(X_0)}|D^m_d u-(D^m_du)_{Q^+_r(X_0)}|^2\,dx\,dt\\
&\le \nonumber C(\frac{r}{R})^{2\gamma+d+2m}\int_{Q^+_R(X_0)}|D^m_d u-(D^m_du)_{Q^+_R(X_0)}|^2\,dx\,dt \\
&\,\,+C\sum_{|\alpha|=m}[f_\alpha]_{a,Q_{2R}^+} ^{\ast 2}R^{2a+2m+d}+C\sum_{|\alpha|<m}\|f_\alpha\|^2_{L_\infty(Q_{2R}^+)}R^{2+2m+d}\label{bdcom},
\end{align}
\begin{align}
&\nonumber\int_{Q^+_r(X_0)}|D_d^lD_{x^\prime}^{m-l}u|^2 \,dx\,dt\\&\le C(\frac{r}{R})^{2(m-l)+d+2m} \int_{Q^+_R(X_0)}|D_d^lD_{x^\prime}^{m-l}u|^2\,dx\,dt
\nonumber\\&\,\,+ C\sum_{|\alpha|=m}[f_\alpha]_{a,Q_{2R}^+} ^{\ast 2}R^{2a+2m+d} +C\sum_{|\alpha|<m}\|f_\alpha\|^2_{L_\infty(Q_{2R}^+)}R^{2+2m+d}\label{bdmix},
\end{align}
where $0<l<m$.
\end{lemma}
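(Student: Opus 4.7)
The plan is to follow exactly the strategy of Lemma \ref{divsim}, replacing the interior decay estimate of Lemma \ref{fund} with the boundary decay estimates of Lemma \ref{keybd}, and exploiting the fact that tangential derivatives preserve both the simple-coefficient homogeneous equation and the homogeneous Dirichlet boundary condition on $\{x_d=0\}$.

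First I would introduce the auxiliary function $w$ defined as the weak solution of
\begin{equation*}
w_t+(-1)^m \l_0 w=\sum_{|\alpha|\le m}D^\alpha f_\alpha \quad \text{in} \quad Q_{2R}^+,
\end{equation*}
with full homogeneous Dirichlet data $w=|Dw|=\cdots=|D^{m-1}w|=0$ on the \emph{entire} parabolic boundary $\partial_p Q_{2R}^+$, which includes both the lateral and bottom pieces of $\partial Q_{2R}$ and the flat portion $\{x_d=0\}\cap Q_{2R}$. Testing this system against $w$, integrating by parts, and using the Legendre--Hadamard condition together with G\aa rding's inequality yields the analogue of \eqref{eq5.00}. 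The key steps are to replace $f_\alpha$ by $f_\alpha(t,x)-f_\alpha(t,0)$ for $|\alpha|=m$, which is legitimate because the $m$-fold integration by parts produces no boundary terms (all derivatives of $w$ of order up to $m-1$ vanish on $\partial_p Q_{2R}^+$), and then use the H\"older seminorm bound $\|f_\alpha(\cdot,x)-f_\alpha(\cdot,0)\|_{L_2(Q_{2R}^+)}^2\le C[f_\alpha]_{a,Q_{2R}^+}^{*2}R^{2a+2m+d}$; for $|\alpha|<m$, apply the Poincar\'e inequality (legitimate since all derivatives of $w$ of order up to $m-1$ vanish on $\partial_p Q_{2R}^+$) to absorb $\|D^\alpha w\|_{L_2}$ into $R^m\|D^m w\|_{L_2}$ and use Young's inequality. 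The outcome is the bound
\begin{equation*}
\|D^m w\|_{L_2(Q_{2R}^+)}^2\le C\sum_{|\alpha|=m}[f_\alpha]_{a,Q_{2R}^+}^{*2}R^{2a+2m+d}+C\sum_{|\alpha|<m}\|f_\alpha\|_{L_\infty(Q_{2R}^+)}^2 R^{2+2m+d}.
\end{equation*}

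Next, set $v:=u-w$, which satisfies $v_t+(-1)^m\l_0 v=0$ in $Q_{2R}^+$ together with the original Dirichlet condition \eqref{bcd} on $\{x_d=0\}\cap Q_{2R}$, so that Lemma \ref{keybd} applies to $v$. To prove \eqref{bdmor}, I would note that $D_{x'}^m v$ again satisfies the homogeneous system (since $\l_0$ has simple coefficients and tangential derivatives commute with $\l_0$) and also satisfies the Dirichlet condition \eqref{bcd}, because tangential differentiation preserves the vanishing of $v,D_d v,\ldots,D_d^{m-1}v$ on $\{x_d=0\}$; then apply \eqref{morrey} to $D_{x'}^m v$. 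To prove \eqref{bdmix}, apply \eqref{lownor} with the same $l$ to $D_{x'}^{m-l}v$, which by the same observation satisfies the homogeneous system with boundary data \eqref{bcd}. To prove \eqref{bdcom}, apply \eqref{com} directly to $v$.

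Finally, in each of the three cases, combine the decay estimate for $v$ (or the appropriate tangential derivative of $v$) with the energy bound for $w$ via the triangle inequality, using $\int_{Q_r^+(X_0)}|D^m w|^2\le \int_{Q_{2R}^+}|D^m w|^2$ to absorb the $w$ contribution into the $[f_\alpha]_a^{*}$ and $\|f_\alpha\|_{L_\infty}$ data terms on the right-hand sides of \eqref{bdmor}--\eqref{bdmix}. I do not expect any serious obstacle: the smoothness assumption on $A^{\alpha\beta}$ and $f_\alpha$ required to make $w$ smooth can be removed by the same mollification argument used at the end of Lemma \ref{divsim}. The only point requiring minor care is verifying that tangential derivatives of $v$ retain both the homogeneous equation and the boundary data, which follows since $\l_0$ has coefficients depending only on $t$ and the Dirichlet conditions $D_d^k v=0$ for $k\le m-1$ on $\{x_d=0\}$ are stable under $D_{x'}$.
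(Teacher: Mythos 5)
Your proposal is correct and follows essentially the same route as the paper: the same decomposition $u=v+w$ with $w$ the weak solution with homogeneous Dirichlet data on $\partial_p Q_{2R}^+$, the same energy bound for $D^m w$ (subtracting $f_\alpha(t,0)$ for $|\alpha|=m$, Poincar\'e for $|\alpha|<m$), the same observation that tangential derivatives of $v$ satisfy \eqref{beq}--\eqref{bcd} so that \eqref{morrey}, \eqref{lownor}, \eqref{com} of Lemma \ref{keybd} apply, and the same triangle-inequality and mollification steps. No gaps.
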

\begin{proof}
We follow the proof of Lemma \ref{divsim}. Let $w$ be the weak solution of the following system
$$
\left\{\begin{aligned}
&w_t+(-1)^m\l_0w=\sum_{|\alpha|\le m}D^\alpha f_\alpha \quad \text{in}\quad Q^+_{2R},\\
&w=0,~|D_dw|=0,\cdots,~|D^{m-1}_dw|=0 \quad \text{on} \quad \partial_pQ^+_{2R}.
\end{aligned}\right.
$$
Then the Poincar\'e inequality and the method in the proof of Lemma \ref{divsim} yield
\begin{align*}
&\int_{Q^+_{2R}}|D^mw|^2\,dx\,dt\\
&\le C\sum_{|\alpha|=m}[f_\alpha]_{a,Q_{2R}^+}^{\ast2} R^{2a+2m+d} +C\sum_{|\alpha|<m}\|f_\alpha\|^2_{L_\infty(Q_{2R}^+)}R^{2+2m+d}.
\end{align*}
We again use the mollification argument as in Lemma \ref{divsim} so that $w$ is smooth. Let $v=u-w$. Then $v$ satisfies all the conditions in Lemma \ref{keybd}, and so does $D_{x^\prime}^m v$. Thus we obtain \eqref{morrey} with $D_{x^\prime}^mv$ in place of $u$.

Combining the estimates of $v$ and $w$, we obtain \eqref{bdmor}. The proofs of \eqref{bdcom} and \eqref{bdmix} are similar.
\end{proof}
Now we are ready to prove the H\"{o}lder estimates similar to the interior case. From \eqref{bdmor}, \eqref{bdcom}, and \eqref{bdmix}, taking $\gamma>a$ and using Lemma \ref{tech}, we get for any $X_0\in \{x_d=0\}\cap Q_R$ and $r\in (0,R)$,
\begin{equation*}
\int_{Q^+_r(X_0)}|D^m_{x^\prime}u|^2+|D^m_d u-(D^m_du)_{Q^+_r(X_0)}|^2+|D_d^lD_{x^\prime}^{m-l}u|^2 \,dx\,dt\le I \end{equation*}
where
\begin{align*}
&I:=C( R^{-(2a+2m+d)} \int_{Q^+_{2R}}|D^mu|^2\,dx\,dt+F^2)r^{2a+2m+d}\\ &F=\sum_{|\alpha|=m}[f_\alpha]_{a,Q_{2R}^+}^\ast+\sum_{|\alpha|<m}\|f_\alpha\|_{L_\infty(Q_{2R}^+)}. \end{align*}
This estimate, together with the interior estimates and Lemma \ref{holder}, implies that
\begin{equation*}
[D^mu]_{\frac{a}{2m},a,Q^+_R}\le C(R^{-(2a+2m+{d})}\int_{Q^+_{2R}}|D^m u|^2\,dx\,dt+F^2)^{\frac{1}{2}}.
\end{equation*}
Similar to  what we did in proving \eqref{eq4.42}, we can apply the argument of freezing the coefficients to deal with the variable coefficients case with lower-order terms. Here we just state the conclusion:
\begin{proposition}
Assume that $R\le 1$ and $u\in C_{loc}^\infty({\overline{\bR^{d+1}_+}})$ satisfies
$$
\left\{\begin{aligned}
&u_t+(-1)^m \l u=\sum_{|\alpha|\le m}D^\alpha f_\alpha \quad \text{in}\quad Q_{2R}^+,\\
&u=0,\,D_du=0,\,\cdots, \,D_d^{m-1}u=0\quad \text{on}\quad \{x_d=0\}\cap Q_{2R}.
\end{aligned}\right.
$$
Suppose that $\l$ and $f_\alpha$ satisfy the conditions in Theorem \ref{existence}. Then there exists a constant $C=C(d,n,m,\lambda, K, \|A^{\alpha\beta}\|_a^\ast, R, a)$ such that
\begin{equation*} 
\langle u\rangle_{\frac{1}{2}+\frac{a}{2m}, Q_R^+}+[D^m u]_{\frac{a}{2m},a,Q^+_R}\le C(\sum_{|\beta|\le m}\|D^{\beta}u\|_{L_\infty(Q^+_{2R})}+F).
\end{equation*}
\end{proposition}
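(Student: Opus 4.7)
My plan is to mimic the proof of Proposition~\ref{div_complete} at the boundary, using Lemma~\ref{bdholder} in place of Lemma~\ref{divsim} to produce Campanato-type estimates at points on $\{x_d = 0\} \cap Q_R$. The spatial H\"older estimate on $D^m u$ then follows by combining these boundary Campanato bounds with the interior Campanato bounds (implicit in the proof of Proposition~\ref{div_complete}) and invoking Lemma~\ref{holder}(ii). The time H\"older semi-norm $\langle u\rangle_{1/2 + a/(2m)}$ is obtained by splitting into an interior case (when $x_{0,d}$ is larger than $|t-s|^{1/(2m)}$, where the test-function argument of Proposition~\ref{div_complete} still applies) and a near-boundary case (where the Dirichlet condition reduces the estimate directly to the spatial H\"older bound just established).

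For the spatial H\"older bound, I fix $X_0 = (t_0, x_0', 0) \in \{x_d = 0\} \cap Q_R$ and freeze the leading coefficients at $(x_0', 0)$, rewriting the system as
\begin{equation*}
u_t + (-1)^m \l_{0, x_0'} u = \sum_{|\alpha| \le m} D^\alpha \tilde f_\alpha,
\end{equation*}
where $\tilde f_\alpha = f_\alpha + (-1)^{m+1}\sum_{|\beta|=m}(A^{\alpha\beta}(t,x) - A^{\alpha\beta}(t, x_0', 0))D^\beta u$ for $|\alpha| = m$, and the lower-order contributions are absorbed into $\tilde f_\alpha$ for $|\alpha| < m$, exactly as in the interior argument leading to~\eqref{highestimate}. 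Crucially, the homogeneous Dirichlet conditions for $u$ are preserved. Lemma~\ref{bdholder} (combined with Lemma~\ref{tech} to trade the $\gamma$-decay for $a$-decay) then yields a Campanato bound at $X_0$ with an additional error of order $\sum_{|\alpha|=|\beta|=m}[A^{\alpha\beta}]_a^{*2} \|D^m u\|_{L_\infty(Q_{2R}^+)}^2 \, r^{2a+2m+d}$. Together with the interior Campanato estimate at points $X_0 \in Q_R^+$ with radii restricted by the distance to $\{x_d = 0\}$, the hypotheses of Lemma~\ref{holder}(ii) are met for $f = D^m u$, yielding
\begin{equation*}
[D^m u]_{\frac{a}{2m}, a, Q_R^+} \le C\Bigl(\sum_{|\beta| \le m}\|D^\beta u\|_{L_\infty(Q_{2R}^+)} + F\Bigr).
\end{equation*}

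For the time H\"older semi-norm, fix $(t, x_0), (s, x_0) \in Q_R^+$ and set $\rho = |t - s|^{1/(2m)}$. We may assume $\rho \le R$, since otherwise $|u(t, x_0) - u(s, x_0)| \le 2\|u\|_{L_\infty(Q_R^+)}$ yields the bound directly. If $x_{0, d} \ge \rho$, then $Q_\rho(t, x_0) \subset Q_{2R}^+$ lies strictly in the interior, and the test-function argument of Proposition~\ref{div_complete} (the calculation from~\eqref{eq4.14} through~\eqref{ut estimate}, with cylinder radius $\rho$) applies verbatim to give $|u(t, x_0) - u(s, x_0)| \le C \rho^{m+a}(\cdots)$. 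If instead $x_{0, d} < \rho$, I exploit the Dirichlet conditions $D_d^k u(\tau, x_0', 0) = 0$ for $0 \le k \le m-1$ and iterate the fundamental theorem of calculus to obtain
\begin{equation*}
u(\tau, x_0) = \int_0^{x_{0, d}} \int_0^{\tau_1} \cdots \int_0^{\tau_{m-1}} D_d^m u(\tau, x_0', \tau_m)\, d\tau_m \cdots d\tau_1;
\end{equation*}
subtracting at $\tau = t$ and $\tau = s$ and inserting the H\"older bound on $D^m u$ just established,
\begin{equation*}
|u(t, x_0) - u(s, x_0)| \le \frac{x_{0, d}^m}{m!}\, [D^m u]_{\frac{a}{2m}, a, Q_R^+}\, |t - s|^{a/(2m)} \le C \rho^{m + a}\, [D^m u]_{\frac{a}{2m}, a, Q_R^+}.
\end{equation*}
In either case the difference is controlled by $C |t-s|^{1/2 + a/(2m)}$ times the desired quantity.

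The principal obstacle is the freezing step at the boundary: it must preserve the Dirichlet conditions so that Lemma~\ref{bdholder} applies both to $u$ and to the auxiliary corrector $w$. This forces us to freeze only in the spatial variables (keeping the full $t$-dependence of the leading coefficients) and to absorb the spatial variation $A^{\alpha\beta}(t,x) - A^{\alpha\beta}(t, x_0', 0)$ into the inhomogeneity, so that $w$ solving the inhomogeneous frozen problem with vanishing parabolic-boundary data leaves $u - w$ satisfying the same Dirichlet conditions as $u$. The remainder is a faithful adaptation of the interior argument, with Case~B of the time estimate supplying the boundary input that the interior test-function method cannot.
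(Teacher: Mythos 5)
Your proposal is correct and follows essentially the same route as the paper: freeze the leading coefficients at a boundary point in the spatial variables only (so the Dirichlet conditions are preserved), apply Lemma \ref{bdholder} together with Lemmas \ref{tech} and \ref{holder}(ii) and the interior estimates, and treat the lower-order terms exactly as in the interior case. The paper only sketches the time semi-norm estimate at the boundary; your two-case argument (the interior test-function computation when $x_{0,d}\ge\rho$, and the Taylor/fundamental-theorem-of-calculus reduction to $[D^m u]_{\frac{a}{2m},a,Q_R^+}$ via the Dirichlet conditions when $x_{0,d}<\rho$) is a correct completion consistent with that sketch.
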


\mysection{Proof of Theorem \ref{existence}}
Before proving the main theorem, let us show a technical lemma.
\begin{lemma}\label{tech2}
Assume that $a\in(0,1)$, $u \in C^\infty_{loc}(\mathbb{R}^{d+1})$, $\Omega $ is an open set in $\mathbb{R}^d$ and $\partial \Omega \in C^2$, $T\in (0,\infty]$, then for any $\varepsilon>0$
\begin{equation*}
\|u\|_{L_\infty((0,T)\times \Omega)} \le \varepsilon [u]_{\frac{a}{2m},a,(0,T)\times \Omega}+C(\varepsilon) \|u\|_{L_2((0,T)\times \Omega)},
\end{equation*}
where $C(\varepsilon)$ is a constant depending on $\varepsilon$.
\end{lemma}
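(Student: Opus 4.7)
The plan is to establish the inequality pointwise via a Campanato-type averaging argument. Fix an arbitrary $(t_0,x_0) \in (0,T) \times \Omega$ and a parameter $r>0$ to be chosen in terms of $\varepsilon$. Around $(t_0,x_0)$ I will build a parabolic set $D_r \subset (0,T)\times\Omega$ of spatial diameter $\lesssim r$ and temporal diameter $\lesssim r^{2m}$, enjoying the uniform lower bound $|D_r|\ge c_0\, r^{d+2m}$, where $c_0$ depends only on $\Omega$ and $T$. For the spatial factor, the $C^2$ regularity of $\partial\Omega$ yields a uniform interior cone condition, so $|B_r(x_0)\cap\Omega|\ge c\, r^d$ for all $x_0\in\overline{\Omega}$ and $r\le r_0(\Omega)$. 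For the temporal factor, I use a forward or backward interval of length $r^{2m}$ at $t_0$; for $r^{2m}$ less than $T/4$ (if $T<\infty$; no restriction if $T=\infty$), one of these two choices lies inside $(0,T)$.

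With $D_r$ in hand, write
\begin{equation*}
u(t_0,x_0)=\dashint_{D_r} u(s,y)\,ds\,dy+\dashint_{D_r}\bigl(u(t_0,x_0)-u(s,y)\bigr)\,ds\,dy.
\end{equation*}
By the Cauchy--Schwarz inequality, the first term is bounded by $|D_r|^{-1/2}\|u\|_{L_2((0,T)\times\Omega)}\le C r^{-(d+2m)/2}\|u\|_{L_2((0,T)\times\Omega)}$. Since every $(s,y)\in D_r$ satisfies $|t_0-s|^{a/(2m)}+|x_0-y|^a \le 2 r^a$, the second term is bounded by $2 r^a [u]_{\frac{a}{2m},a,(0,T)\times\Omega}$. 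Hence
\begin{equation*}
|u(t_0,x_0)|\le 2 r^a [u]_{\frac{a}{2m},a,(0,T)\times\Omega}+C r^{-(d+2m)/2}\|u\|_{L_2((0,T)\times\Omega)}.
\end{equation*}
Choosing $r=(\varepsilon/4)^{1/a}$ (capped at $\min(r_0,(T/4)^{1/(2m)})$) gives the desired inequality with $C(\varepsilon)=C(\varepsilon/4)^{-(d+2m)/(2a)}$; for $\varepsilon$ above the cap, the inequality follows trivially with a fixed constant.

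The main technical obstacle is verifying the uniform volume lower bound $|D_r|\ge c_0 r^{d+2m}$ \emph{independently} of the position of $(t_0,x_0)$, in particular up to the parabolic boundary. The spatial side uses the uniform interior thickness furnished by the $C^2$ assumption on $\partial\Omega$, while the temporal side requires a brief case split according to whether $t_0$ lies close to $0$ or to $T$, which is precisely why the forward/backward flexibility in the choice of interval is needed.
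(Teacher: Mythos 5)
Your proof is correct and follows essentially the same route as the paper: bound $|u(t_0,x_0)|$ by its deviation from an average over a nearby parabolic set of volume $\gtrsim r^{d+2m}$ (controlled by $r^a$ times the H\"older seminorm) plus the average itself (controlled via Cauchy--Schwarz by $r^{-(d+2m)/2}\|u\|_{L_2}$), then choose $r=r(\varepsilon)$. The only cosmetic difference is that the paper places a full small cylinder $Q_r(s,y)$ inside the domain containing the point, whereas you intersect a centered cylinder with the domain and invoke the uniform measure-density from the $C^2$ boundary; both exploit the boundary regularity in the same way.
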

\begin{proof}
Since $\partial\Omega\in C^2$, there exists a $r_0>0$ such that for any $X_0 \in (0,T)\times \Omega$, we can choose an arbitrarily small cylinder $Q_r(s,y) \subset \Omega$ with $r\in (0, r_0)$ so that $X_0 \in Q_r(s,y)$ and
\begin{align*}
|u(X_0)|&\le|u(X_0)-\dashint_{Q_r(s,y)}u(t,x) \,dx\,dt|+|\dashint_{Q_r(s,y)}u(t,x) \,dx\,dt|\\
& \le Cr^{a}[u]_{\frac{a}{2m},a}+Cr^{-\frac{d+2m}{2}}\|u\|_{L_2}.
\end{align*}
Therefore,
\begin{equation*}
\|u\|_{L_\infty} \le  \varepsilon[u]_{\frac{a}{2m},a}+C(\varepsilon)\|u\|_{L_2}.
\end{equation*}
The lemma is proved.
\end{proof}
Next, we show a global a priori estimate.
\begin{proposition}
Let $\l$, $\Omega$, and $f_\alpha$ satisfy the conditions in Theorem \ref{existence}.
Assume that $u\in {C^\infty((0,T)\times\Omega)}$ and satisfies the following system
\begin{equation}        \label{cylindereq}
\left\{\begin{aligned}
& u_t+(-1)^m\l u=\sum_{|\alpha|\le m}D^\alpha f_\alpha \quad \text{in} \quad (0,T)\times \Omega,\\
&u=0,\,|D u|=0,\,\cdots,\,|D^{m-1}u|=0 \quad \text{on} \quad   [0,T)\times \partial\Omega,\\
& u=0 \quad \text{on} \quad\{0\}\times \Omega.
\end{aligned}\right.
\end{equation}
Then  \eqref{finaldiv} holds with $g=0$.
\end{proposition}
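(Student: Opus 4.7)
My plan is to combine the local interior estimate (Proposition \ref{div_complete}) with the local boundary estimate (the proposition at the end of Section 4.2) via a finite covering of $\overline{\Omega}$, then close the argument by interpolation using Lemma \ref{tech2}.

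\textbf{Step 1: Boundary flattening.} Because $\partial\Omega\in C^{m,a}$, around every $x_0\in\partial\Omega$ there is a neighborhood $U$ and a $C^{m,a}$ diffeomorphism $\Phi:U\to B_{r_0}$ straightening the boundary to $\{y_d=0\}$. Applying $\Phi$ to \eqref{cylindereq} yields a system of the same form on $Q_{r_0}^+$ with the homogeneous Dirichlet conditions $\tilde u=D_d\tilde u=\cdots=D_d^{m-1}\tilde u=0$ on $\{y_d=0\}$ and with new leading coefficients $\tilde A^{\alpha\beta}$ and data $\tilde f_\alpha$ whose $C^{a*}$ and $L_\infty$ norms are controlled by the original ones times a constant depending only on $\Phi$ (hence on $\Omega$). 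Away from $\partial\Omega$ no change of variables is needed; I just use interior cylinders.

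\textbf{Step 2: Local-to-global estimate for $D^m u$.} Cover $\overline\Omega$ by finitely many interior balls $B_{r_0/2}(x_i)\Subset\Omega$ and boundary charts of radius $r_0/2$, chosen so that the doubled neighborhoods still lie in $\Omega$ or in the straightened half-cylinders. Apply Proposition \ref{div_complete} on each interior piece and the half-space proposition from Section 4.2 on each boundary piece (after pulling back via $\Phi^{-1}$; the H\"older semi-norms transform with bounded multiplicative constants since $\Phi\in C^{m,a}$). Summing gives
\begin{equation*}
[D^m u]_{\frac{a}{2m},a,(0,T)\times\Omega}+\langle u\rangle_{\frac12+\frac{a}{2m},(0,T)\times\Omega}\le C\Bigl(\sum_{|\beta|\le m}\|D^\beta u\|_{L_\infty((0,T)\times\Omega)}+F\Bigr).
\end{equation*}

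\textbf{Step 3: Interpolation.} Write $a+m\in(m,m+1)$, so by the definition of $\|\cdot\|_{\frac{a+m}{2m},a+m}$ it suffices to bound $\|D^\alpha u\|_{L_\infty}$ for $|\alpha|\le m$, the H\"older semi-norms $[D^\alpha u]_{\frac{a+m-|\alpha|}{2m},a}$ for $|\alpha|\le m$ (the case $|\alpha|=0$ being covered by Step 2 since $\frac{a+m}{2m}=\frac{1}{2}+\frac{a}{2m}$), and intermediate time/space H\"older semi-norms of $D^\alpha u$ for $0<|\alpha|<m$. The last ones follow by standard interpolation between $[D^m u]_{\frac{a}{2m},a}$ and $\|u\|_{L_\infty}$. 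To close the estimate I must absorb the $\|D^\beta u\|_{L_\infty}$ terms on the right: applying Lemma \ref{tech2} (extended to intermediate derivatives by the usual interpolation on H\"older scales) gives, for every $\varepsilon>0$ and $|\beta|\le m$,
\begin{equation*}
\|D^\beta u\|_{L_\infty((0,T)\times\Omega)}\le \varepsilon\,[D^m u]_{\frac{a}{2m},a,(0,T)\times\Omega}+C(\varepsilon)\,\|u\|_{L_2((0,T)\times\Omega)}.
\end{equation*}
Choosing $\varepsilon$ small and absorbing into the left-hand side of the inequality from Step 2 yields
\begin{equation*}
[D^m u]_{\frac{a}{2m},a,(0,T)\times\Omega}+\langle u\rangle_{\frac12+\frac{a}{2m},(0,T)\times\Omega}\le C\bigl(\|u\|_{L_2((0,T)\times\Omega)}+F\bigr),
\end{equation*}
and combining with the intermediate interpolations produces \eqref{finaldiv} with $G=0$.

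\textbf{Main obstacles.} The delicate point is Step 1: verifying that the Legendre--Hadamard ellipticity and the $C^{a*}$ regularity of coefficients are preserved by a $C^{m,a}$ boundary straightening (they are, because $\Phi$ and $D\Phi$ remain H\"older of the right order, so products and compositions of $C^{a*}$ functions with $D\Phi$ stay in $C^{a*}$), and that the boundary estimate from Section 4.2 is applicable after the change of variables. The interpolation step is essentially standard once one recognizes that Lemma \ref{tech2} upgrades to derivatives via iteration; the only care needed is that the constant $C(\varepsilon)$ remains independent of $T$, which is automatic because all local estimates are uniform in the base-point $X_0$ and the covering is finite.
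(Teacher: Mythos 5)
Your proposal is correct and follows essentially the same route as the paper: a partition of unity with boundary flattening to globalize the interior estimate of Proposition \ref{div_complete} and the boundary proposition of Section 4.2, followed by H\"older interpolation and Lemma \ref{tech2} to absorb the sup-norms of $u$ and its derivatives, leaving only $\|u\|_{L_2}$. The only (immaterial) difference is that the paper first interpolates all $\|D^\beta u\|_{L_\infty}$ down to $\|u\|_{L_\infty}$ and then applies Lemma \ref{tech2} once, whereas you apply Lemma \ref{tech2} to each derivative directly.
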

\begin{proof}
Using  the standard arguments of partition of unity and flattening the boundary, we combine the interior, boundary estimates, and the estimate in $t$ variable for the divergence form systems to get
\begin{equation*}
\langle u\rangle_{\frac{a+m}{2m},(0,T)\times \Omega}+[D^mu]_{\frac{a}{2m},a,(0,T)\times \Omega}\le C(\sum_{|\beta|\le m}\|D^{\beta}u\|_{L_\infty((0,T)\times \Omega)}+F).
\end{equation*}
By the interpolation inequalities in H\"{o}lder spaces, for instance, see \cite[Section 8.8]{Kry96},
\begin{equation*}
\langle u\rangle_{\frac{a+m}{2m},(0,T)\times \Omega}+[D^mu]_{\frac{a}{2m},a,(0,T)\times \Omega} \le C(\|u\|_{L_\infty((0,T)\times \Omega)}+F).
\end{equation*}
Applying Lemma \ref{tech2} and the interpolation inequalities again, we get
\begin{equation*}
\|u\|_{L_\infty((0,T)\times \Omega)} \le C(\varepsilon) \|u\|_{L_2((0,T)\times \Omega)}+\varepsilon [D^m u]_{\frac{a}{2m},a,(0,T)\times \Omega}{+\varepsilon \langle u\rangle_{\frac{a+m}{2m},(0,T)\times \Omega}}.
\end{equation*}
Upon taking $\varepsilon$ sufficiently small, we arrive at  \eqref{finaldiv}.
\end{proof}
In order to implement the method of continuity, we need the right-hand side of \eqref{finaldiv} to be independent of $u$ and this leads us to consider the following system,
\begin{equation}     \label{exist}
\left\{\begin{aligned}
&u_t+(-1)^m\l u+\kappa u=\sum_{|\alpha|\le m}D^\alpha f_\alpha\quad \text{in}\quad (0,T)\times \Omega,\\
&u=0,~|Du|=0,\cdots,|D^{m-1}u|=0\quad \text{on}\quad [0,T)\times \partial\Omega, \\
& u=0\quad\text{on} \quad \{0\}\times \Omega,
\end{aligned}\right.
\end{equation}
where $\kappa$ is a large constant to be specified later.
We rewrite the system as
\begin{equation}\label{reexist}
u_t+(-1)^m\l_h u+\kappa u=\sum_{|\alpha|\le m}D^\alpha f_\alpha +(-1)^m(\l_h-\l)u,
\end{equation}
where $\l_h u$ denotes the sum of the highest-order terms.
Then multiply $u$ to both sides of \eqref{reexist} and integrate over $(0,T)\times \Omega$. Thus
\begin{align}
&\|D^m u\|^2_{L_2((0,T)\times \Omega)} +\kappa \|u\|^2_{L_2((0,T)\times \Omega)}\nonumber \\&\le \sum_{|\alpha|\le m}(-1)^{|\alpha|}\int_{(0,T)\times \Omega} f_\alpha D^\alpha u\,dx\,dt\nonumber\\
&\,\,+\sum_{|\alpha|<m, |\beta|\le m}(-1)^{m+|\alpha|+1}\int_{(0,T)\times \Omega}A^{\alpha\beta}D^\beta u D^\alpha u \,dx\,dt\nonumber\\&\,\,-\sum_{|\alpha|=m,|\beta|<m} \int_{(0,T)\times \Omega}A^{\alpha\beta}D^\beta u D^\alpha u \,dx\,dt.\label{k estimate}
\end{align}
Take a point $x_0\in \Omega$. Due to the homogeneous Dirichlet boundary condition, if $|\alpha|=m$, the factor $f_\alpha$ in the first integral on the right-hand side above can be replaced by $f_\alpha(t,x)-f_\alpha(t,x_0)$.
We use the Cauchy--Schwarz inequality, Young's inequality, and the interpolation inequality to bound the right-hand side by
\begin{multline*}
C(n,m,d,\lambda,K,\|A^{\alpha\beta}\|_a^\ast, \varepsilon) \Big(\sum_{|\alpha|\le m}\int_{(0,T)\times \Omega} |f_\alpha-f_\alpha(t,x_0)1_{|\alpha|=m}|^2\,dx\,dt\\
+\int_{(0,T)\times \Omega}|u|^2 \,dx\,dt\Big)+\varepsilon\|D^m u\|^2_{L_2((0,T)\times \Omega)}.
\end{multline*}
After taking $\varepsilon$ sufficiently small to absorb the term $\varepsilon\|D^m u\|^2_{L_2((0,T)\times \Omega)}$ to the left-hand side of \eqref{k estimate} and
choosing $\kappa $ sufficiently large, we reach
\begin{equation}\label{l2control}
\|u\|_{L_2((0,T)\times \Omega)} \le CF,
\end{equation}
where $C=C(d,n,m,\lambda,K, \|A^{\alpha\beta}\|_a^\ast,\Omega)$.
Combining \eqref{l2control} with \eqref{finaldiv}, we get the following lemma.
\begin{lemma}                   \label{lem5.3}
Assume that $\l$, $\Omega$, and $f_\alpha$ satisfy all the conditions in Theorem \ref{existence} and $u \in {C^\infty((0,T)\times\Omega)} $ satisfies \eqref{exist} with a sufficiently large constant $\kappa$ depending on $n,m,d,\lambda,K$ and $\|A^{\alpha\beta}\|_a^\ast$,
then
\begin{equation}\label{finexist}
\|u\|_{\frac{a+m}{2m}, a+m,{(0,T)\times \Omega}} \le C F,
\end{equation}
where $C=C(d,m,n,\lambda,K,\|A^{\alpha\beta}\|_a^\ast, \Omega, a)$.
\end{lemma}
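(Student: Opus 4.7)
The plan is a two-step assembly of results already established in this section: the global Schauder a priori bound from the preceding proposition (which yields \eqref{finaldiv} with $g=0$ for solutions of \eqref{cylindereq}), and the $L_2$ energy bound \eqref{l2control} that exploits the dissipative effect of the added $\kappa u$ term.

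First, I would recast \eqref{exist} as a problem of the form \eqref{cylindereq} by absorbing $\kappa u$ into the zeroth-order coefficient of $\l$. Writing $\tilde A^{00} := A^{00} + (-1)^m \kappa I$ and $\tilde{\l} u := \l u + (-1)^m \kappa u$, one checks that $\tilde A^{00}\in C^{a\ast}$ with $[\tilde A^{00}]_a^\ast = [A^{00}]_a^\ast$ (since $\kappa I$ is constant) and $\|\tilde A^{00}\|_{L_\infty} \le K + \kappa$, while the leading coefficients of $\tilde\l$ coincide with those of $\l$ and therefore still satisfy the Legendre--Hadamard condition \eqref{hadamard}. Consequently $\tilde\l$ meets all hypotheses of Theorem \ref{existence} (the enlarged $L_\infty$ bound being tracked by a constant depending only on the admissible parameters), so the proposition preceding \eqref{finexist}, applied to \eqref{exist} viewed as \eqref{cylindereq} with $\l$ replaced by $\tilde\l$, yields
\begin{equation*}
\|u\|_{\frac{a+m}{2m},a+m,(0,T)\times\Omega}
 \le C\bigl(\|u\|_{L_2((0,T)\times\Omega)} + F\bigr),
\end{equation*}
where $C=C(d,n,m,\lambda,K,\|A^{\alpha\beta}\|_a^\ast,\Omega,a)$; note that the dependence on $\kappa$ is absorbed into the dependence on these parameters because $\kappa$ is itself chosen as a function of them.

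Second, I would insert the energy bound \eqref{l2control}, which was obtained just above by testing \eqref{reexist} against $u$, integrating by parts, bounding the cross terms involving $f_\alpha$ and the lower-order coefficients via Cauchy--Schwarz, Young's inequality and the Poincar\'e inequality, and then choosing $\kappa$ large enough that the coercive term $\kappa\|u\|_{L_2}^2$ absorbs the residual $\|u\|_{L_2}^2$ contributions. Substituting $\|u\|_{L_2((0,T)\times\Omega)}\le CF$ into the display above immediately gives \eqref{finexist} with a constant of the required form.

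No genuine obstacle is anticipated: the argument is a synthesis of previously proved estimates. The only point requiring care is verifying that the reduction $\l \rightsquigarrow \tilde\l$ preserves the structural hypotheses of the proposition (in particular, that the H\"older semi-norm $\|A^{\alpha\beta}\|_a^\ast$ is unchanged and the ellipticity constant $\lambda$ remains the same), so that the resulting constant in \eqref{finexist} depends only on the parameters listed and not on $u$ or on any auxiliary norm involving $u$.
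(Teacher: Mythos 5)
Your proposal is correct and takes essentially the same route as the paper, which proves the lemma simply by combining the global a priori estimate \eqref{finaldiv} (with $g=0$) with the $L_2$ bound \eqref{l2control}. Your explicit reduction of \eqref{exist} to the form \eqref{cylindereq} by absorbing $\kappa u$ into the zeroth-order coefficient is a detail the paper leaves implicit, and you handle it correctly (leading coefficients and $\lambda$ unchanged, $\kappa$ depending only on the admissible parameters so the final constant has the stated dependence).
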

Now we are ready to prove Theorem \ref{existence}.
\begin{proof}[Proof of Theorem \ref{existence}]
By considering $u-g$ instead of $u$, without loss of generality, we can assume $g=0$. It remains to prove the solvability. Let $\kappa$ be the constant from the previous lemma. Since $u$ satisfies \eqref{cylindereq}, the function $v:=e^{-\kappa t}u$ satisfies
\begin{equation*}
v_t+(-1)^m\l v+\kappa v=e^{-\kappa t}\sum_{|\alpha|\le m}D^\alpha f_\alpha.
\end{equation*}
We then reduce the problem to the solvability of $v$.
By Lemma \ref{lem5.3}, \eqref{finexist} holds for $v$. Consider the following equation \begin{equation*}
 v_t+(-1)^m(s\l v+(1-s)\Delta ^{m} v)+\kappa v=e^{-\kappa t}\sum_{|\alpha|\le m}D^\alpha f_\alpha
\end{equation*}
with the same initial and boundary conditions,
where the parameter $s \in [0,1]$.  It is known that when $s=0$ there is a unique solution in $C^{\frac{a+m}{2m},a+m}([0,T)\times\Omega)$. Then by the method of continuity  and the a priori estimate \eqref{finexist}, we find a solution when $s=1$ which gives $u$.
 \end{proof}

\begin{remark}
Actually the condition $f_\alpha \in L_\infty$ for $|\alpha|<m$ can be relaxed. With slight modification in our proof, $f_\alpha$ can be in some Morrey spaces.
\end{remark}

\mysection{Estimate for non-divergence type systems}
In this section, we deal with the non-divergence type systems.
\subsection{Interior estimate}
First, we consider the interior estimates for  the non-divergence form system
\begin{equation*}
u_t+(-1)^mL_0u=f \quad in\quad Q_{2R}.
\end{equation*}
For this system, the interior estimates are simple consequences of the corresponding estimates for divergence form systems.
We differentiate the system $m$ times with respect to $x$ to get
\begin{equation*}
D^mu_t+(-1)^mL_0D^mu=D^mf.
\end{equation*}
Thanks to the estimates of the divergence type systems, by \eqref{fixeq},
\begin{equation*}
[D^{2m}u]_{\frac{a}{2m},a,Q_R}\le C(\|D^{2m}u\|_{L_\infty(Q_{2R})}+[f]_{a,Q_{2R}}^{\ast}).
\end{equation*}
Next we deal with the general non-divergence form systems. The coefficients are all in $C^{a\ast }$.  Following exactly the same idea as in handling the divergence form, we first consider $L=\sum_{|\alpha|=|\beta|=m}A^{\alpha\beta}D^\alpha D^\beta$.  Fix $y\in B_R^+$ and let
$$
L_{0y}=\sum_{|\alpha|=|\beta|=m}A^{\alpha\beta}(t,y)D^\alpha D^\beta.
$$
We rewrite the system as follows
\begin{equation*}
u_t+(-1)^m L_{0y} u= f+(-1)^m(L_{0y}-L)u.
\end{equation*}
Set $\zeta$ to be an infinitely differentiable function in $\mathbb{R}^{d+1}$ such that
\begin{equation}
\zeta =1 \quad\text{in}\quad Q_{2R}, \quad \zeta =0 \quad\text{outside}\quad (-(4R)^{2m}, (4R)^{2m})\times B_{4R}\label{zeta1}.
\end{equation}
From Theorem 2.6 in \cite{DK10}, for $T=(4R)^{2m}$, let $w$ be the unique $W_2^{1,2m}$-solution of the following system
\begin{equation*}
w_t+(-1)^mL_{0y}w=\zeta(f-(f)_{B_{4R}(t)})+\zeta(-1)^m(L_{0y}-L)u
\end{equation*}
in $(-T,0)\times \bR^{d}$ {with the zero initial condition at $t=-T$}. It is easy to obtain the estimate for $w$,
\begin{align*}
&\|D^{2m}w\|^2_{L_2(Q_{4R})} \le C\|f-(f)_{B_{4R}}(t)\|^2_{L_2(Q_{4R})}
+C\|(L_{0y}-L)u\|^2_{L_2(Q_{4R})}\nonumber\\
&\,{\le CR^{2a+2m+d}[f]^{\ast2}_{a,Q_{4R}}+ C R^{2a+2m+d}\sum_{|\alpha|=|\beta|=m}[A^{\alpha\beta}]_a^{\ast2}\|D^{2m} u\|^2_{L_\infty(Q_{4R})}}.
\end{align*}
We apply the mollification argument so that $w$ is smooth. Let $v=u-w$ which is also a smooth function. Moreover since $\zeta=1$ in $Q_{2R}$, $D^{2m} v$ satisfies \eqref{hom}. Therefore \eqref{eq 4.08} holds for $D^{2m}v$. Combining the estimate of $w$ and $D^{2m}v$, similar to the proof of Proposition \ref{div_complete}, we get
\begin{equation}
[D^{2m} u]_{\frac{a}{2m},a,Q_R}\le C(\|D^{2m}u\|_{L_\infty(Q_{4R})}+[f]_{a,Q_{4R}}^\ast)\label{eq6.40}.
\end{equation}
For the operator $L$ with lower-order terms, we rewrite the systems as follows
\begin{equation*}
u_t+(-1)^mL_h u=f+(-1)^{m+1}\sum_{|\alpha|+|\beta|<2m}A^{\alpha\beta}D^\alpha D^\beta u.
\end{equation*}
where $L_h=\sum_{|\alpha|=|\beta|=m}A^{\alpha\beta}D^\alpha D^\beta$.
Let
$$
\tilde{f}=f+(-1)^{m+1}\sum_{|\alpha|+|\beta|<2m}A^{\alpha\beta}D^\alpha D^\beta u.
$$
An easy calculation shows that
\begin{align*}
[\tilde{f}]_{a, Q_{4R}}^\ast&\le C\big([f]_{a, Q_{4R}}^\ast+\sum_{|\alpha|+|\beta|<2m}(\|A^{\alpha\beta}\|_{L_\infty}{[D^\alpha D^\beta u]_{a, Q_{4R}}^*}\\
&\,+[A^{\alpha\beta}]_a^\ast\|D^\alpha D^\beta u\|_{L_\infty(Q_{4R})})\big).
\end{align*}
From \eqref{eq6.40} with $\tilde{f}$ in place of $f$, the following estimate holds
\begin{equation*}
[D^{2m}u]_{\frac{a}{2m},a,Q_R}\le C(\sum_{|\gamma|\le2m}\|D^\gamma u\|_{L_\infty(Q_{4R})}+[f]_{a,Q_{4R}}^{\ast }),
\end{equation*}
where $C=C(d,m,n,\lambda,K,\|A^{\alpha\beta}\|_a^\ast, R, a)$.
Because
\begin{equation*}
u_t=(-1)^{m+1}Lu+f,
\end{equation*}
it follows immediately that
\begin{equation*}
\|u_t\|_{a, Q_R}^*+[D^{2m}u]_{\frac{a}{2m},a,Q_R}\le C(\sum_{|\gamma|\le2m}\|D^\gamma u\|_{L_\infty(Q_{4R})}+\|f\|_{a,Q_{4R}}^{\ast}).
\end{equation*}
Implementing a standard interpolation argument, {for instance, see  \cite[Section 8.8]{Kry96} and Lemma 5.1 of \cite{Gia93}}, we are able to prove the first part of Theorem \ref{nondiv theorem}.

\subsection{Boundary estimate} In the non-divergence case, better estimates than the ones in Lemma \ref{keybd} are necessary.
For \eqref{com}, we actually can estimate more normal derivatives up to $2m-1$-th order.  In order to show this, let us prove the following lemma. A similar result can be found in \cite{DK11}.
 \begin{lemma}\label{tech3}
 Assume that $u\in C_{loc}^\infty({\overline{\bR^{d+1}_+}})$ satisfies \eqref{beq} and \eqref{bcd}. Let $Q(x)$ be a vector-valued polynomial of order $m-1$ and $P(x)=x_d^m Q(x)$. Suppose that
 \begin{equation*}
 (D^k D_d^m P(x))_{Q^+_R}=(D^k D_d^m u(t,x))_{Q^+_R},
 \end{equation*}
where $0\le k\le m-1$. Let $v=u-P(x)$, then there exists a constant $C=C(d,m,n,\lambda)$ such that
\begin{equation*}
\|D^k D_d^mv\|_{L_2(Q^+_R)} \le CR^{m-1-k}\|D^{m-1}D_d^m v\|_{L_2(Q^+_R)}
\end{equation*}
for any $0\le k\le m-1$.
 \end{lemma}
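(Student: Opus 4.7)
My approach is to verify that $v := u - P$ inherits both the PDE and the Dirichlet boundary data, then reduce the inequality to an iterated parabolic Poincar\'e estimate for $g := D_d^m v$.  First, since $P(x) = x_d^m Q(x)$ is a spatial polynomial of degree at most $2m - 1$, every $2m$-th order spatial derivative of $P$ vanishes, so $\l_0 P \equiv 0$ and $\partial_t P = 0$, hence $v$ solves \eqref{beq}.  The factor $x_d^m$ forces $D_d^j P(x', 0) = 0$ for $0 \le j \le m - 1$, so $v$ also satisfies \eqref{bcd}.  Since $A^{\alpha\beta}$ depends only on $t$, $\l_0$ commutes with all spatial differentiations, so $g := D_d^m v$ and each of its derivatives $D^\alpha g$ satisfy $g_t + (-1)^m \l_0 g = 0$ in $Q^+_R$ (without any boundary condition on $g$).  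The matching-averages hypothesis translates into $(D^\alpha g)_{Q^+_R} = 0$ for every $|\alpha| \le m - 1$.

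The desired estimate reduces to the following single-step parabolic Poincar\'e inequality: if $h$ solves $h_t + (-1)^m \l_0 h = 0$ in $Q^+_R$ with $(h)_{Q^+_R} = 0$, then $\|h\|_{L_2(Q^+_R)} \le C R\,\|D_x h\|_{L_2(Q^+_R)}$ with $C = C(d, m, n, \lambda)$.  Applying this iteratively to $h = D^\alpha g$ for $|\alpha| = k, k+1, \ldots, m - 2$ (each of which satisfies the equation with zero $Q^+_R$-average by the previous paragraph) telescopes to the claimed bound $\|D^k g\|_{L_2(Q^+_R)} \le C R^{m-1-k} \|D^{m-1} g\|_{L_2(Q^+_R)}$.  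Scaling reduces the proof to $R = 1$.  To establish the single-step inequality, I split $h = (h - (h)_{B^+_R}(t)) + (h)_{B^+_R}(t)$: the first piece is handled slice-wise by spatial Poincar\'e, while the second, a function of $t$ only with $\int_I (h)_{B^+_R}(t)\,dt = 0$ on $I = (-R^{2m}, 0)$ (inherited from $(h)_{Q^+_R} = 0$), is controlled by the one-dimensional Poincar\'e inequality in $t$ after using the PDE to estimate $\frac{d}{dt}(h)_{B^+_R}(t) = -(-1)^m (\l_0 h)_{B^+_R}(t)$.

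The main obstacle is bounding $(\l_0 h)_{B^+_R}(t)$ by $\|D_x h\|_{L_2}$: direct integration by parts produces a boundary integral on $\partial B^+_R$ involving $D^{2m - 1} h$, which is far too high-order.  The fix is to replace the plain spatial average by a smoothed average $\tilde h(t) := \int h(t, x) \xi_R(x)\,dx$, where $\xi_R$ is a smooth, compactly supported rescaling of the cutoff from Lemma \ref{tech function}; moving $2m - 1$ derivatives onto $\xi_R$ in the divergence-form expression $\int \sum A^{\alpha\beta}(t)\, D^{\alpha + \beta} h \cdot \xi_R\,dx$ yields $|\tilde h'(t)| \le C\|D^{2m - 1}\xi_R\|_{L_2(B^+_R)} \|D_x h(t,\cdot)\|_{L_2(B^+_R)}$, and the scaling combines with the one-dimensional Poincar\'e bound to produce the required factor of $R$.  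Verifying that $\int_I \tilde h(t)\,dt = 0$ for this smoothed average uses the vanishing moment conditions on $\xi_R$ together with the hypothesis $(D^\alpha h)_{Q^+_R} = 0$ for $|\alpha| \le m - 1$.  With this in place the iteration produces the scaling $R^{m - 1 - k}$ claimed in the lemma.
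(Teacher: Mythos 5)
Your overall strategy is sound and is in fact essentially the paper's: replace the plain spatial average by a smoothed average against a compactly supported cutoff, use the equation and integration by parts to dump $2m-1$ derivatives onto the cutoff so that only low-order derivatives of $v$ remain, combine a slice-wise spatial Poincar\'e inequality with a one-dimensional Poincar\'e inequality in $t$, and iterate in the order of differentiation. (Your single-derivative version of the time-derivative bound, giving the clean telescoping $\|D^j g\|\le CR\|D^{j+1}g\|$, is a slightly tidier bookkeeping than the paper's, which leaves $m-1$ derivatives on $D_d^m v$ and then induces; also, the vanishing-moment property of $\xi$ from Lemma \ref{tech function} is not needed here—unit integral suffices.)

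There is, however, one step that fails as stated: the claim that $\int_I \tilde h(t)\,dt=0$, justified "by the vanishing moment conditions on $\xi_R$ together with $(D^\alpha h)_{Q^+_R}=0$ for $|\alpha|\le m-1$." This is not true in general. The quantity $\int_I\tilde h\,dt=\int_{Q^+_R}h\,\xi_R\,dx\,dt$ is a weighted space--time average; the hypotheses only give the vanishing of the \emph{unweighted} averages of $h$ and its derivatives over $Q^+_R$. Moment conditions on $\xi$ reproduce averages of polynomials only, and $h$ is not a polynomial; writing $\xi_R-|B^+_R|^{-1}1_{B^+_R}$ as a divergence and integrating by parts produces $-\int Dh\cdot\Phi$, which has no reason to vanish. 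Consequently you cannot apply the $1$-dimensional Poincar\'e inequality to $\tilde h$ itself. The repair is short and is exactly the paper's device: let $c$ be the time average of $\tilde h$ over $I$ and estimate $\|\tilde h-c\|_{L_2(I)}\le CR^{2m}\|\tilde h'\|_{L_2(I)}$; then, since $(h)_{Q^+_R}=0$, for \emph{any} constant $c$ one has
\begin{equation*}
\|h\|_{L_2(Q^+_R)}^2\le \|h\|_{L_2(Q^+_R)}^2+c^2|Q^+_R|=\|h-c\|_{L_2(Q^+_R)}^2,
\end{equation*}
so it suffices to bound $\|h-c\|\le\|h-\tilde h(t)\|+\|\tilde h(t)-c\|$, and no vanishing of $\int_I\tilde h\,dt$ is required. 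With this correction your scaling computation goes through and yields the stated single-step inequality, and hence the lemma.
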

 \begin{proof}
 Without loss of generality, let us assume $R=1$. Choose $\xi(y)\in C^\infty_0(B^+_1)$ with unit integral. Then let
 \begin{equation*}
 g_k(t)=\int_{B^+_1} \xi(y) D^k D_d^mv(t,y)\,dy,\quad c_k=\int_{-1}^0 g_k(t)dt.
 \end{equation*}
 By the Poincar\'e inequality and H\"{o}lder's inequality, the following estimate holds
 \begin{align*}
& \int_{B^+_1}|D^k D_d^mv(t,x) -g_k(t)|^2 \,dx\\
&=\int_{B^+_1}|\int_{B^+_1}(D^k D_d^mv(t,x)-D^k D_d^mv(t,y))\xi(y)\,dy|^2dx\\
&\le C\int_{B^+_1}\int_{B^+_1}|(D^k D_d^mv(t,x)-D^k D_d^mv(t,y))|^2\,dy\,dx\\
&\le C\int_{B^+_1}|D^{k+1}D_d^m v(t,y)|^2\,dy.
 \end{align*}
 Because $(D^k D_d^mv)_{Q^+_1}=0$, we have
 \begin{align}\nonumber
 &\|D^k D_d^mv\|_{L_2(Q^+_1)}\le \|D^k D_d^mv-c_k\|_{L_2(Q^+_1)}\\
 &\le \|D^k D_d^mv-g_k(t)\|_{L_2(Q^+_1)}+\|g_k(t)-c_k\|_{L_2(Q^+_1)}\nonumber\\
 &\le C\|D^{k+1} D_d^mv\|_{L_2(Q^+_1)}+C\|\partial_t g_k\|_{L_2(-1,0)}.\label{tech3.2}
 \end{align}
Since $v$ satisfies the same system as $u$, by the definition of $g_k$ and integrating by parts
\begin{align*}
\partial_tg_k(t)&=\int_{B^+_1}\xi(y)D^k D_d^m\partial_tv(t,y)\,dy\\
&=(-1)^{m+k+1}\int_{B^+_1}D^k \xi(y)\l_0D^m_dv(t,y)\,dy.
\end{align*}
We integrate by parts again leaving $m-1$ derivatives on $D^m_d v$ and moving all the others onto $\xi$ to get
 \begin{align*}
 {|\partial_tg_k(t)|}\le C\int_{B_1^+}|D^{m-1}D^m_d v|\,dy.
 \end{align*}
Therefore,
 \begin{equation}
 |\partial_tg_k(t)|^2\le C\|D^{m-1}D_d^m v(t,\cdot)\|^2_{L_2(B^+_1)}.\label{tech3.3}
 \end{equation}
Combining \eqref{tech3.2} and \eqref{tech3.3}, we prove the lemma by induction.
 \end{proof}

Next, we prove an estimate for $D^{2m-1} u$.
\begin{lemma}\label{nordirection}
Assume that $u\in C_{loc}^{\infty}(\overline{{\bR_+^{d+1}}})$ and satisfies \eqref{beq} and \eqref{bcd}. Then for any $0<r\le R<\infty$ and $\gamma \in (0,1)$, there exists a constant $C=C(d,n,m,\lambda,\gamma)$ such that for any $X_0\in \{x_d=0\}\cap Q_R$
\begin{align*}
&\int_{Q^+_r(X_0)}|D^{2m-1}u-(D^{2m-1}u)_{Q_r^+(X_0)}|^2\,dx\,dt\\&\le C(\frac{r}{R})^{2\gamma+d+2m}\int_{Q^+_R(X_0)}|D^{2m-1}u-(D^{2m-1}u)_{Q_R^+(X_0)}|^2\,dx\,dt.
\end{align*}
\end{lemma}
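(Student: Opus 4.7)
The plan is to adapt the strategy used in Lemma \ref{keybd} for \eqref{com}, but at the higher level of $D^{2m-1}u$, leveraging Lemma \ref{tech3} to overcome the fact that the only polynomials respecting the Dirichlet boundary condition have the form $P(x)=x_d^m Q(x)$ with $\deg Q\le m-1$; such polynomials can kill averages of $D^{2m-1}u$ only when at least $m$ of the derivatives are normal, so a direct one-shot subtraction is not enough. By translation and parabolic dilation it suffices to treat $R=1$ and $X_0=(0,0)$.

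First I would produce, via a standard finite-dimensional linear algebra argument (nonsingularity of the associated moment-type matrix on $Q^+_1$ built from monomials $x_d^m y^\beta$), a polynomial $P(x)=x_d^m Q(x)$ with $\deg Q\le m-1$ such that $v:=u-P$ satisfies $(D^k D_d^m v)_{Q^+_1}=0$ for every $0\le k\le m-1$. Since $\deg P\le 2m-1$, $\l_0$ annihilates $P$ so that $v$ still solves \eqref{beq}, and since $P=D_dP=\cdots=D_d^{m-1}P=0$ on $\{x_d=0\}$, $v$ satisfies \eqref{bcd} as well. Iterated one-dimensional Poincar\'e in $x_d$ (using $v=D_dv=\cdots=D_d^{m-1}v=0$ on $\{x_d=0\}$), together with Lemma \ref{tech3} applied with $k=0$, gives
\[
\|v\|_{L_2(Q^+_1)}\le C\|D_d^m v\|_{L_2(Q^+_1)}\le C\|D^{m-1}D_d^m v\|_{L_2(Q^+_1)}.
\]
Because $D^{m-1}D_d^m P$ is a constant matching $(D^{m-1}D_d^m u)_{Q^+_1}$ by the choice of $P$, one has $D^{m-1}D_d^m v=D^{m-1}D_d^m u-(D^{m-1}D_d^m u)_{Q^+_1}$, hence $\|v\|_{L_2(Q^+_1)}\le C\|D^{2m-1}u-(D^{2m-1}u)_{Q^+_1}\|_{L_2(Q^+_1)}$.

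Since $v$ still solves the homogeneous system, the bootstrap of Lemma \ref{keybd} that produces \eqref{boot} applies to $v$ and yields $\|v\|_{W_p^{1,2m}(Q^+_{1/2})}\le C\|v\|_{L_2(Q^+_1)}$ for arbitrary $p<\infty$. Given $\gamma\in(0,1)$, I would choose $p$ so large that $a:=2m-(d+2m)/p\in(2m-1+\gamma,2m)$ and invoke the Sobolev embedding $W_p^{1,2m}\hookrightarrow C^{a/2m,a}$ to obtain
\[
[D^{2m-1}v]_{\frac{\gamma}{2m},\gamma,Q^+_{1/2}}\le C\|D^{2m-1}u-(D^{2m-1}u)_{Q^+_1}\|_{L_2(Q^+_1)}.
\]
For $r\le 1/2$, the H\"older control immediately gives $\int_{Q^+_r}|D^{2m-1}v-(D^{2m-1}v)_{Q^+_r}|^2\,dx\,dt\le Cr^{2\gamma+d+2m}\|D^{2m-1}u-(D^{2m-1}u)_{Q^+_1}\|_{L_2(Q^+_1)}^2$, and the identity $D^{2m-1}v-(D^{2m-1}v)_{Q^+_r}=D^{2m-1}u-(D^{2m-1}u)_{Q^+_r}$ (valid because $D^{2m-1}P$ is constant, as $\deg P\le 2m-1$) translates the bound back to $u$. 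The case $r\in(1/2,1]$ is trivial, and parabolic scaling restores general $R$.

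The main obstacle I expect is the careful scaling bookkeeping: the Poincar\'e loss of order $R^m$ and the Lemma \ref{tech3} loss of order $R^{m-1}$ must combine with the gain of order $R^{-(2m-1+\gamma)-(d+2m)/2}$ coming from the $W_p^{1,2m}$ bootstrap plus Sobolev embedding so that the overall self-similar inequality produces exactly the factor $(r/R)^{2\gamma+d+2m}$ in the final statement. A secondary technical point is verifying the existence of the polynomial $P$, i.e., the nonsingularity of the moment matrix built from the basis $\{x_d^m y^\beta\}_{|\beta|\le m-1}$ against the functionals $f\mapsto (D^k D_d^m f)_{Q^+_1}$ with $0\le k\le m-1$; this dimension count matches and can be checked directly.
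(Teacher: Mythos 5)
Your proposal is correct and follows essentially the same route as the paper: reduce to $R=1$, subtract the polynomial $P=x_d^m Q$ of Lemma \ref{tech3}, apply the $W_p^{1,2m}$ bootstrap and Sobolev embedding from Lemma \ref{keybd} to $v=u-P$ with $a=2m-1+\gamma$, and convert $\|v\|_{L_2(Q_1^+)}$ back to the mean oscillation of $D^{2m-1}u$ via the Poincar\'e inequality in $x_d$ and Lemma \ref{tech3}. The only (welcome) addition is your explicit moment-matrix argument for the existence of $P$, which the paper leaves implicit in the hypothesis of Lemma \ref{tech3}.
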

\begin{proof}
By scaling and translation of the coordinates, without loss of generality, we can assume $R=1$  and $X_0=(0,0)$. Moreover, we only need to consider the case when $r<1/2$ for the same reason as in the proof of Lemmas \ref{fund} and \ref{keybd}.  Recall that in Lemma \ref{keybd} we have
\begin{equation*}
\|u\|_{\frac{a}{2m},a,Q_1^+}\le C\|u\|_{W^{1,2m}_p(Q^+_1)}\le C\|u\|_{L_2(Q_2^+)},
\end{equation*}
where $a<2m$ can be arbitrarily close to $2m$. Set $a{=\gamma+2m-1}\in(2m-1, 2m)$. Applying the inequality above with $1/2$ and 1 in place of 1 and 2, and the Poincar\'e inequality, we have
\begin{align}\label{nordirection1}
&\int_{Q^+_r}|D^{2m-1}u-(D^{2m-1}u)_{Q^+_r}|^2\,dx\,dt\nonumber \\
&\le C r^{2{\gamma}+d+2m} \|u\|^2_{\frac{a}{2m},a,Q^+_r}\le  C r^{2\gamma+d+2m}  \|u\|^2_{W^{1,2m}_p(Q_{1/2}^+)}\nonumber \\
&\le  C r^{2\gamma+d+2m}\|u\|^2 _{L_2(Q^+_1)}\le  C r^{2\gamma+d+2m} \|D^m_d u\|^{2}_{L_2(Q^+_1)}.
\end{align}
Let $v$ be the function in Lemma \ref{tech3}. Note that $v$ also satisfies \eqref{beq} and \eqref{bcd}. Then the inequality \eqref{nordirection1} holds with $v$ in place of $u$. Since $P(x)$ is a polynomial of degree $2m-1$, using Lemma \ref{tech3}, we have
\begin{align*}\nonumber
&\int_{Q^+_r}|D^{2m-1}u-(D^{2m-1}u)_{Q^+_r}|^2\,dx\,dt=
\int_{Q^+_r}|D^{2m-1}v-(D^{2m-1}v)_{Q^+_r}|^2\,dx\,dt\\
&\le C  r^{2\gamma+d+2m}  \int_{Q^+_1}|D_d^m v|^2\,dx\,dt
\le  C r^{2\gamma+d+2m} \int_{Q^+_1}|D^{m-1}D_d^m v|^2\,dx\,dt\\
&=C r^{2\gamma+d+2m} \int_{Q^+_1}|D^{m-1}D_d^mu-(D^{m-1}D_d^mu)_{Q^+_1}|^2
\,dx\,dt\\
&\le C  r^{2\gamma+d+2m} \int_{Q^+_1}|D^{2m-1}u-(D^{2m-1}u)_{Q^+_1}|^2\,dx\,dt.
\end{align*}
The lemma is proved.
\end{proof}

Let us turn to non-homogeneous systems. Consider that $u\in C_{loc}^\infty({\overline{\bR^{d+1}_+}})$ and satisfies
$$
\left\{\begin{aligned}
&\nonumber u_t+(-1)^mL_0u= f \quad \text{in} \quad Q^+_{4R},\\
& u=0,\,D_d u=0,\cdots, D^{m-1}_d u=0 \quad \text{on}\quad Q_{4R}\cap \{x_d=0\}.
\end{aligned}\right.
$$
We shall show the estimates of all derivatives up to order $2m$ with the exception of $D_d^{2m}u$.
Let $\zeta$ be the function in \eqref{zeta1}.
For $T=(4R)^{2m}$, consider the following system
$$
\left\{\begin{aligned}
&w_t+(-1)^mL_0w=\zeta (f-(f)_{B^+_{4R}}(t)) \quad \text{in}\quad  (-T,0)\times \mathbb{R}^d_{+},\\
&w=0,\,D_dw=0,\cdots,\,D_d^{m-1}w=0 \quad \text{on} \quad (-T,0)\times\{x_d=0\}\\
&{w=0\quad \text{on} \quad \{t=-T\}}.
\end{aligned}\right.
$$
We know from Theorem 2.6 in \cite{DK10} that the system above has a unique $W^{1,2m}_2$-solution which satisfies the following estimate
\begin{equation}
\|D^{2m}w\|^2_{L_2(Q^+_{4R})}\le C\|f-(f)_{B^+_{4R}}(t)\|^2_{L_2(Q^+_{4R})}\le CR^{2a+2m+d}[f]^{\ast2}_{a,Q^+_{4R}}.\label{w 2m}
\end{equation}
We again apply the mollification argument so that $w$ is smooth. Observe that  $v:=u-w$ satisfies
$$
\left\{\begin{aligned}
&v_t+(-1)^m L_0 v=(1-\zeta)f +\zeta(f)_{B^+_{4R}}(t)\quad\text{in}\quad Q^+_{2R},\\
&v=0,\,D_dv=0,\cdots,D^{m-1}_dv =0 \quad \text{on} \quad \{x_d=0\}\cap Q_{2R}.
\end{aligned}\right.
$$
Since $\zeta =1$ in $Q^+_{2R}$,  $\tilde v:=D_{x'}v$ satisfies
\begin{equation}        \label{v eq}
\left\{\begin{aligned}
&\tilde v_t+(-1)^m L_0 \tilde v=0\quad\text{in}\quad Q^+_{2R},\\
&\tilde v=0,\,D_d\tilde v=0,\cdots,D_d^{m-1}\tilde v =0 \quad \text{on} \quad \{x_d=0\}\cap Q_{2R}.
\end{aligned}\right.
\end{equation}
Thanks to  Lemma \ref{nordirection}, for any $X_0 \in \{x_d=0\}\cap Q_R$ we get
\begin{align}
& \int_{Q^+_r(X_0)}|D_{x^\prime}D^{2m-1} v-(D_{x^\prime}D^{2m-1}v)_{Q^+_r(X_0)}|^2 \,dx\,dt\nonumber\\
&\le  C(\frac{r}{R})^{2\gamma+d+2m}  \int_{Q^+_R(X_0)}|D_{x^\prime}D^{2m-1} v-(D_{x^\prime}D^{2m-1}v)_{Q^+_R(X_0)}|^2 \,dx\,dt\label{v estimate}.
\end{align}
Therefore, by the triangle inequality,
\begin{align}\nonumber
& \int_{Q^+_r(X_0)}|D_{x^\prime}D^{2m-1} u-(D_{x^\prime}D^{2m-1} u)_{Q^+_r(X_0)}|^2 \,dx\,dt\\\nonumber
& \le C(\frac{r}{R})^{2\gamma+d+2m} \int_{Q^+_R(X_0)}|D_{x^\prime}D^{2m-1} u-(D_{x^\prime}D^{2m-1} u)_{Q^+_R(X_0)}|^2\,dx\,dt\\
&\,\,+C\int_{Q^+_{2R}}|D^{2m}w|^2 \,dx\,dt\nonumber.
\end{align}
Combining \eqref{w 2m}, we obtain
\begin{align*}
& \int_{Q^+_r(X_0)}|D_{x^\prime}D^{2m-1} u-(D_{x^\prime}D^{2m-1} u)_{Q^+_r(X_0)}|^2 \,dx\,dt\\\nonumber
& \le C(\frac{r}{R})^{2\gamma+d+2m} \int_{Q^+_R(X_0)}|D_{x^\prime}D^{2m-1} u-(D_{x^\prime}D^{2m-1} u)_{Q^+_R(X_0)}|^2\,dx\,dt\\
&\,\,+CR^{2a+d+2m}[f]_{a,Q^+_{4R}}^{\ast 2}.
\end{align*}
Taking $\gamma>a$, from Lemmas \ref{tech}, \ref{holder}, and the corresponding interior estimates,  we obtain the estimate for the H\"{o}lder semi-norm,
\begin{equation*}
[D_{x^\prime}D^{2m-1}u]_{\frac{a}{2m},a,Q^+_R}\le C(\|D^{2m} u\|_{L_2(Q^+_{4R})}+[f]_{a,Q^+_{4R}}^{\ast}).
\end{equation*}
Let us turn to the case that the coefficients are functions of both $t$ and $x$, and the method of freezing the coefficients can still be implemented in this condition. We first consider the case that $L=\sum_{|\alpha|=|\beta|=m}A^{\alpha\beta}D^\alpha D^\beta$. Fix $y\in B_R^+$ and rewrite the system as follows
\begin{equation*}
u_t+(-1)^m L_{0y} u= f+(-1)^m(L_{0y}-L)u.
\end{equation*}

We use the same $\zeta$ as in \eqref{zeta1}, $T=(4R)^{2m}$, and $w$ is the unique $W_2^{1,2m}$-solution of the system
\begin{equation*}
w_t+(-1)^mL_{0y}w=\zeta (f-(f)_{B^+_{4R}}(t)) +\zeta(-1)^m(L_{0y}-L)u
\end{equation*}
in $(-T,0)\times \mathbb{R}_+^d$
with the Dirichlet boundary conditions $w=0,\,D_dw=0,\cdots,\,D_d^{m-1}w=0$ on  $(-T,0)\times\{x_d=0\}$ {and the zero initial condition at $t=-T$}.
 It is easy to obtain the estimate for $w$,
\begin{align*}
&\|D^{2m}w\|^2_{L_2(Q^+_{4R})} \le C\|f-(f)_{B^+_{4R}}(t)\|^2_{L_2(Q^+_{4R})}
+C\|(L_{0y}-L)u\|^2_{L_2(Q^+_{4R})}\nonumber\\
&\,{\le CR^{2a+2m+d}[f]^{\ast2}_{a,Q^+_{4R}}+ C R^{2a+2m+d}\sum_{|\alpha|=|\beta|=m}[A^{\alpha\beta}]_a^{\ast2}\|D^{2m} u\|^2_{L_\infty(Q^+_{4R})}}.
\end{align*}
Let $v=u-w$. Then $\tilde v:=D_{x'} v$ satisfies \eqref{v eq}, which implies \eqref{v estimate}.
Consequently, taking $\gamma>a$, we get the following inequality
\begin{align}                       \label{eq9.35}
[D_{x^\prime}D^{2m-1}u]_{\frac{a}{2m},a,Q^+_R} \le C([f]_{a,Q^+_{4R}}^{\ast}+\|D^{2m}u\|_{L_\infty(Q^+_{4R})}).
\end{align}
If the operator $L$ has lower-order terms, we can move all the lower-order terms to the right-hand side regarded as a part of $f$.
  Hence we proved the second part of Theorem \ref{nondiv theorem}, i.e.,
\begin{equation*}
[D_{x^\prime}D^{2m-1}u]_{\frac{a}{2m},a,Q^+_R}\le C([f]_{a,Q^+_{4R}}^{\ast}+\sum_{|\gamma|\le 2m}\|D^{\gamma}u\|_{L_\infty(Q^+_{4R})}).
\end{equation*}
The proof of Theorem \ref{nondiv theorem} is completed.

\begin{remark}                      \label{rem6.3}
We need more regularity assumptions to get the estimate of $[D_d^{2m}u]_{\frac{a}{2m},a, Q_R^+}$.
For example, if assuming $A^{\alpha\beta},f \in C^{\frac{a}{2m},a}$, then for $R \le1$,  by using a similar method one can show that
\begin{equation}
[D^{2m}_d u]_{\frac{a}{2m},a,Q_R^+}+[u_t]_{\frac{a}{2m},a, Q_R^+}\le C(\|f\|_{\frac{a}{2m},a, Q_{8R}^+}{+\|u\|_{L_2(Q_{8R}^+)}}).          \label{eq9.07}
\end{equation}
We give a sketched proof. First, let us assume that the coefficients are constants and $u$ is smooth and satisfies \eqref{beq} and \eqref{bcd}. In this case, we can differentiate the equation with respect to $t$ which means $u_t$ satisfies the same equation. From Lemma \ref{keybd}, we know that for any $r<R\le1$ and $X_0\in \{x_d=0\}\cap Q_R$, there exists a constant $C$ such that
\begin{equation}
\int_{Q^+_r(X_0)}|u_t|^2\,dx\,dt\le C(\frac{r}{R})^{d+4m}\int_{Q^+_R(X_0)}|u_t|^2\,dx\,dt.\label{t direction}
\end{equation}
We are now ready to show \eqref{eq9.07}.
For simplicity we only consider $L$ which consists of highest-order terms. We use the same $\zeta$ as in \eqref{zeta1}, $T=(4R)^{2m}$,  and fix $X_0 \in Q_{R}^+$. Let $w$ be the $W^{1,2m}_2$-solution of the system
\begin{equation*}
w_t+(-1)^mL_{X_0}w=\zeta (f-(f)_{Q^+_{4R}}(t)) +\zeta(-1)^m(L_{X_0}-L)u
\end{equation*}
in $(-T,0)\times \mathbb{R}^d_+$ with the Dirichlet boundary conditions $w=0,\,D_dw=0,\cdots,\,D_d^{m-1}w=0$ on $(-T,0)\times\{x_d=0\}$ {and the zero initial condition at $t=-T$}, where
\begin{equation*}
L_{X_0}=A^{\alpha\beta}(X_0)D^{\alpha}D^\beta.
\end{equation*}
Let $v=u-w$, which satisfies
$$
\left\{\begin{aligned}
&v_t+(-1)^mL_{X_0} v= (f)_{Q^+_{4R}}\quad \text{in}\quad Q^+_{2R},\\
&v=0,\,D_dv=0,\cdots, D^{m-1}_d v=0 \quad\text{on}\quad \{x_d=0\}\cap Q_{2R}.
\end{aligned}\right.
$$
Then \eqref{t direction} holds with $v$ in place of $u$. By the triangle inequality, for any $X_1 \in \{x_d=0\}\cap Q_R$ and $r\in (0,R)$,
\begin{align}
\int_{Q_r^+(X_1)}|u_t|^2\,dx\,dt \le C (\frac{r}{R})^{d+4m}\int_{Q_R^+(X_1)}|u_t|^2 \,dx\,dt+ C\int_{Q_{R}^+(X_1)}|w_t|^2\,dx\,dt.\label{u_t}
\end{align}
By Theorem 2.6 in \cite{DK10},
\begin{align}\nonumber
\|w_t\|^2_{L_2(Q^+_{4R})}& \le C \|f-(f)_{Q^+_{4R}}\|^2_{L_2(Q^+_{4R})}+\|(L_{X_0}-L)u\|^2_{L_2(Q^+_{4R})}\\
& \le CR^{2m+d+2a}\Big([f]^2_{\frac{a}{2m},a, Q^+_{4R}}+\sum_{|\alpha|=|\beta|=m}[A^{\alpha\beta}]^2_{\frac{a}{2m},a}
\|D^{2m}u\|^2_{L_\infty(Q^+_{4R})}\Big).\label{w_t}
\end{align}
We combine \eqref{u_t}, \eqref{w_t}, and the corresponding interior estimates to obtain a H\"{o}lder estimate,
 \begin{equation}
[u_t]_{\frac{a}{2m},a,Q_R^+} {\le C \big([f]_{\frac{a}{2m},a,Q_{4R}^+}+\sum_{|\alpha|=|\beta|=m}[A^{\alpha\beta}]_{\frac{a}{2m},a}\|D^{2m}u\|_{L_\infty(Q_{4R}^+)}
+\|u_t\|_{L_2(Q^+_{4R})}\big)}.      \label{u_t holder}
\end{equation}
It remains to estimate $D_d^{2m}u$. Let $\gamma= (0,0,\cdots, m)$,
\begin{equation*}
A^{\gamma\gamma}D^\gamma D^\gamma u=(-1)^{m+1}u_t -\sum_{(\alpha,\beta)\neq(\gamma,\gamma)}A^{\alpha\beta}D^\alpha D^\beta u +(-1)^mf.
\end{equation*}
Since $A^{\gamma\gamma}$ is positive definite, it follows easily that
\begin{align*}
[D^{2m}_d u]_{\frac{a}{2m},a,Q_R^+} \le C \big([u_t]_{\frac{a}{2m},a,Q^+_R}+\sum_{_{\quad|\alpha|= 2m}^{\alpha\neq (0,\cdots,2m)}}\|D^\alpha u\|_{\frac{a}{2m},a,Q^+_R}+[f]_{\frac{a}{2m},a,Q_R^+}\big).
\end{align*}
Plugging \eqref{eq9.35} and \eqref{u_t holder} into the inequality above and using the interpolation inequalities, we immediately prove \eqref{eq9.07}.
\end{remark}


\bibliographystyle{plain}
\def\cprime{$'$}\def\cprime{$'$} \def\cprime{$'$} \def\cprime{$'$}
  \def\cprime{$'$} \def\cprime{$'$}

\end{document}